\documentclass[11pt,rqno, a4paper]{article}

\usepackage{xcolor}
\pagecolor{white}



\usepackage{titlesec}
\titleformat*{\section}{\sc\centering\large} 
\titleformat*{\subsection}{\bf} 
\titleformat*{\subsubsection}{\it} 



\usepackage{graphicx}%
\usepackage{multirow}%
\usepackage{amsmath,amssymb,amsfonts}%
\usepackage{amsthm}%
\usepackage{mathrsfs}%
\usepackage[title]{appendix}%
\usepackage{xcolor}%
\usepackage{textcomp}%
\usepackage{manyfoot}%
\usepackage{booktabs}%
\usepackage{algorithm}%
\usepackage{algorithmicx}%
\usepackage{algpseudocode}%
\usepackage{listings}%

\usepackage{mathtools}
\usepackage{multirow}
\usepackage{verbatim}
\usepackage{graphicx}
\usepackage[shortlabels]{enumitem}
\usepackage{hyperref}
\usepackage[noabbrev]{cleveref}
\usepackage{caption}
\usepackage[normalem]{ulem}
\usepackage{amsthm,amscd,upref,amstext}

\DeclareMathOperator{\Ric}{Ric}

\DeclarePairedDelimiterX{\norm}[1]{\lVert}{\rVert}{#1}

\usepackage{amsmath}
\usepackage{amssymb}
\usepackage{amsfonts}
\usepackage{setspace}
\usepackage{latexsym}
\usepackage{mathrsfs}
\usepackage{epsfig}
\usepackage{amsthm}
\usepackage{color}

\usepackage[margin=2cm]{caption}

\usepackage{comment}

\usepackage{transparent}
\graphicspath{{./}{images/}}

\usepackage{import}
\usepackage{xifthen}
\usepackage{pdfpages}
\usepackage{transparent}

\newcommand{%
    
    \import{./}{.pdf_tex}
}[2][1]{%
    
    \import{./}{#2.pdf_tex}
}






\newcommand{\be}{\begin{equation}}
\newcommand{\ee}{\end{equation}}

\newcommand{\vs}{\vspace{0.2cm}}


\usepackage{fancyhdr}
\pagestyle{fancy}


\fancyfoot{}
\fancyhead[LE,RO]{}
\fancyhead[LO,RE]{}
\fancyfoot[C]{\thepage}



\captionsetup[figure]{font={stretch=1,small}}

\newtheorem{Theorem}{Theorem}[section]
\newtheorem{Remark}[Theorem]{Remark}
\newtheorem{Definition}[Theorem]{Definition}
\newtheorem{Proposition}[Theorem]{Proposition}

\newtheorem{Lemma}[Theorem]{Lemma}
\newtheorem{Corollary}[Theorem]{Corollary}





\addtolength{\hoffset}{-1cm}\addtolength{\textwidth}{1.7cm}\addtolength{\textheight}{2cm}\addtolength{\oddsidemargin}{0.3cm}\addtolength{\evensidemargin}{-0.2cm}\addtolength{\voffset}{-1.2cm}
\linespread{1.2}
\headsep = 9pt \addtolength{\headwidth}{1.3cm}




\usepackage{tocloft} 

\setlength\cftparskip{1pt}
\setlength\cftbeforesecskip{1pt}
\setlength\cftaftertoctitleskip{1pt}

\usepackage{setspace, tocloft}


\setlength\cftparskip{-.1pt}
\setlength\cftbeforesecskip{1pt}
\setlength\cftaftertoctitleskip{1pt}

\allowdisplaybreaks


\numberwithin{equation}{section}


\begin{document}


\begin{center}
{\LARGE\bf Deriving Perelman's entropy from Colding's\vs 

monotonic volume}
\vspace{.5cm}

{\large\sc Ignacio Bustamante\footnote{ibustamante@fing.edu.uy} {\rm and}\ Mart\'in Reiris\footnote{mreiris@cmat.edu.uy}}
\vs

{\it Universidad de la Rep\'ublica, Uruguay}
\vspace{.2cm}

\begin{abstract}
In his groundbreaking work from 2002, Perelman introduced two fundamental monotonic quantities: the reduced volume and the entropy. While the reduced volume was motivated by the Bishop-Gromov volume comparison applied to a suitably constructed $N$-space, which becomes Ricci-flat as $N\rightarrow \infty$, Perelman did not provide a corresponding explanation for the origin of the entropy. In this article, we demonstrate that Perelman's entropy emerges as the limit of Colding's monotonic volume for harmonic functions on Ricci-flat manifolds, when appropriately applied to Perelman's $N$-space.    
\end{abstract}

\end{center}

\section{Introduction}

Monotonic quantities play a fundamental role in elliptic and parabolic PDEs, particularly in the study of singularities, regularity of solutions, and asymptotics \cite{CM12}. For instance, Huisken's monotonicity formula for the mean curvature flow \cite{H90} and Struwe's monotonicity formula for harmonic map heat flows \cite{S88} (see also Hamilton's generalization of both formulas to general manifolds \cite{H93}), are well-known examples of monotonic quantities for parabolic equations, which have been widely applied in geometric analysis. For the `elliptic counterparts' of these parabolic cases, the minimal surface equation and the harmonic map equation, there are analogous, though distinct, quantities, such as Allard's monotonicity for minimal surfaces \cite{A72} and the well-known monotonicity for harmonic maps \cite{Schoen1982}, respectively. Additional examples include Hamilton's monotonic formula for the Yang-Mills heat flow \cite{H93} and Price's monotonic formula for the elliptic Yang-Mills equation \cite{P83}, as well as Almgren's frequency for harmonic functions \cite{A83} and Poon's parabolic frequency for the heat equation  \cite{Po96} (see also the recent generalization of  the parabolic frequency to manifolds by Colding and Minicozzi \cite{CM21}). 

Parabolic monotonic formulas are often intriguing and challenging to derive and typically rely on a peculiar use of backward solutions to heat-type equations, whereas elliptic quantities do not. Despite appearing unrelated, Davey \cite{Dav18} (see also \cite{DS24}) demonstrated that, at least in a handful of examples that include some of the ones mentioned earlier, the parabolic monotonicity can be derived from a subtle use of elliptic ones. This is achieved by applying elliptic monotonicity to associated equations on suitably constructed $N$-dimensional spaces, and taking the limit as $N\rightarrow \infty$. These equations, which are equivalent to the parabolic equation (they hold if and only if the parabolic does) are formally identical to the elliptic one up to terms that vanish as $N\rightarrow \infty$. Put differently, the parabolic equation can be viewed as the elliptic one in an $N$-space plus negligible additional terms, allowing the monotonic parabolic formula to be derived as a limit of an elliptic one (see the related discussions by Tao \cite{T09l} and Šverák \cite{Sve11}). 
 
For the Ricci-flat equation,
\be\label{RIC=0}
\Ric = 0,
\ee
the best-known monotonic quantity is the Bishop-Gromov relative volume, widely applied across a variety of contexts in differential geometry. Another example, which will be central to this article, was introduced by Colding in \cite{C12}. This new quantity, which we will refer to as `monotonic volume', is defined along the level sets of positive Green functions and was used, for instance, to study asymptotic cones on Ricci-flat non-parabolic manifolds \cite{C12, CM13}. Generalizations of Colding's monotonic volume were later given by Colding and Minicozzi in \cite{CMi13} and applications to General Relativity were explored by Agostiniani, Mazzieri and Oronzio in \cite{AMO24}.

For the Ricci flow equation,
\be
\partial_{t}g = -2\Ric,
\ee
the `parabolic counterpart' of the Ricci flat equation, Perelman \cite{Per02} introduced two fundamental monotonic quantities: the reduced volume and the entropy. Additionally, he demonstrated that the reduced volume can be derived from the Bishop-Gromov relative volume on a carefully constructed $N$-space that becomes Ricci-flat as the dimension $N$ goes to infinity. However, no analogous justification on the origin of the entropy was provided.

In this article, we demonstrate that Perelman's entropy arises as the limit as $N\rightarrow \infty$ of Colding's monotonic volume when appropriately applied to Perelman's $N$-space. This proves that, as in the previously discussed parabolic examples, both the reduced volume and the entropy can be understood as originating from a unified framework using monotonic formulas from the `counterpart' Ricci-flat equation. Additionally, we derive the expression for the derivative of Perelman's entropy from Colding's formula for the derivative of the monotonic volume.

\subsection{Overview}

To explain the results we begin recalling the definition of Perelman's entropy, of Perelman's $N$-space, and of Colding's monotonic volume. 

Let us start recalling Perelman's entropy. On a closed manifold $M^{n}$ of dimension $n$, let $g(\tau)$, $\tau\in [0,T]$, be a solution to the backward Ricci flow equation,
\be\nonumber
\partial_{\tau} g = 2\Ric.
\ee
Then, let $u$ be a solution to,
\be
\partial_{\tau} u = \Delta u - Ru,
\ee
positive at the initial time $\tau = 0$, and hence, also positive for all times by the maximum principle. Define $f$ by $u = \tau^{-n/2}e^{-f}$ so that $f$ satisfies,
\be\label{BACKCHO}
\partial_{\tau} f = \Delta f - |\nabla f|^{2} + R - \frac{n}{2\tau}.
\ee
Then, the Perelman entropy $\mathcal{W}$ (for the function $f$) is given by,
\be\label{Def W}
\mathcal{W}(\tau) = \int_{M}\left(\tau(|\nabla f|^{2}+R) + f - n\right)(4\pi \tau)^{-n/2}e^{-f}d\nu,
\ee
and its derivative takes the form,
\be\label{derivative of W}
\frac{d}{d\tau} \mathcal{W} = - \int_{M}2\tau \left\vert \Ric + \nabla\nabla f -\frac{1}{2\tau}g\right\rvert^{2}(4\pi\tau)^{-n/2}e^{-f}d\nu,
\ee
from which it is deduced that it is monotonically decreasing in $\tau$.
\vs
\vs

Next, we define Perelman's $N$-space, which he used to motivate the definition of the reduced volume. As earlier, let $g(\tau)$ be a solution to the backward Ricci flow equation on $[0,T]$. Denote by $\mathbb{S}^{N}$ the $N$-dimensional unit-sphere of $\mathbb{R}^{N+1}$. Let $r$ be the distance to the origin in $\mathbb{R}^{N+1}$ and let $\theta$ denote points in $\mathbb{S}^{N}$. 
Then, the Perelman $N$-space $(\hat{M},\hat{g})$ is the manifold,  
\be\nonumber
\hat{M}^{m} := (0,\sqrt{2NT})_{r}\times \mathbb{S}^{N}_{\theta}\times M^{n}_{x}\subset \mathbb{R}^{N+1}\times M^{n},\quad {\rm where}\quad m=N+n+1,
\ee
endowed with the metric,
\be
\hat{g} := r^{2} g_{\mathbb{S}^{N}} + (1+\frac{Rr^{2}}{N^{2}})dr^{2} + g,
\ee
where $R$ is the scalar curvature of $g$ and where at a point $(r,\theta, x)\in \hat{M}$, $g_{\mathbb{S}^{N}}$ is evaluated at $\theta$ and $R$ and $g$ at $(\tau = r^{2}/2N,x)\in (0,T)\times M$. 
\vs
\vs

Now, we briefly discuss Colding's monotonic volume. On a manifold $(N,\bar{g})$ admitting a positive and proper Green function $G$, define $b=G^{1/(2-m)}$. Then, Colding defined two functions, the `area' $A$ and the `volume' $V$, on the level sets of $b$ as,
\be\label{area Colding}
A(s) = \frac{1}{s^{m-1}}\int_{b=s} (|\nabla b|^{2} -1)|\nabla b|\, dA,
\ee
and,
\be\label{volume Colding}
V(s) = \frac{1}{s^{m}}\int_{b\leq s} (|\nabla b|^{2} -1)|\nabla b|^{2}\, dV. 
\ee
Having $A$ and $V$, the monotonic volume is defined as
\be\label{monotonic volume Colding}
W(s)=2(m-1)V(s)-A(s), 
\ee
and the derivative of $W$ is given by the expression (see \cite{C12}, Theorem 2.4),
\be\label{derivative of Coldings volume}
\frac{d}{ds}W(s)= -\frac{1}{2s^{m+1}} \int_{b\leq s} \left|\nabla\nabla b^{2} - \frac{\Delta b^{2}}{m}g\right|^{2} dV,
\ee
from which it follows that it is monotonically decreasing in $s$. For instance, on the manifold $\mathbb{R}^{m}$, the Green function at the origin is given by $G(x) = 1/|x|^{m-2}$. If we define $b = G^{1/(2-m)}$, then $b = |x|$, and therefore, $|\nabla b| = 1$. Consequently, in $\mathbb{R}^{m}$, the area, volume, and monotonic volume are all identically zero.
\vs

We now outline how to derive Perelman's entropy from Colding's monotonic volume. Define  $h: \hat{M} \to \mathbb{R}$ by, 
    \be\label{def h}
    h = r^{-(m-2)}e^{-f(\tau,x)},
    \ee
where $f$ satisfies (\ref{BACKCHO}) and $\tau=r^2/2N$. The function $h$ will serve as the analog to the Green function when applying Colding's monotonic formula to Perelman's $N$-space. The observation that $h$ is almost harmonic is essentially due to Perelman. In Section 6.1 of \cite{Per02}, it is observed that $$\tilde{u}^* := \tau^{-\frac{N-1}{2}}u = (2N)^{\frac{m-2}{2}}h,$$ is harmonic modulo $N^{-1}$ if and only if $f$ satisfies (\ref{BACKCHO}) (the precise statement is $\hat{\Delta}\tilde{u}^* = \tau^{(2-m)/2}O(1/N)$, see Proposition \ref{prop lap h}). From this definition, and following \cite{C12}, we define $b: \hat{M} \to \mathbb{R}$ as,
    \be\label{def b}
    b = h^{1/(2-m)}= re^{f/(m-2)}.
    \ee
    Using this $b$, we define the quantities $\mathcal{A}_N$, $\mathcal{V}_N$, and $\mathcal{W}_N$ as in (\ref{area Colding}), (\ref{volume Colding}), and (\ref{monotonic volume Colding}), respectively, each scaled by a coefficient $c_N$ (see Definition \ref{quantities}), and prove the following main results. Firstly, in Corollary \ref{convergence monot Colding} of Section \ref{sect 3} we demonstrate that the monotonic volume $\mathcal{W}_N$  and its derivative converge uniformly on compact sets in $(0, T)$ to Perelman's $\mathcal{W}$-entropy and its derivative, respectively. Secondly, in Theorem \ref{Final Theo} of Section \ref{sect 4}, we obtain Perelman's formula (\ref{derivative of W}) for the derivative of $\mathcal{W}$ as a limit of the derivative (\ref{derivative of Coldings volume}) in Perelman's $N$-space.  Besides these two results, and as a byproduct of the proofs, in Theorem \ref{WN convergence} we also show that the area $\mathcal{A}_N$ and its derivative also converge uniformly to Perelman's $\mathcal{W}-$entropy and its derivative respectively on compact sets in $(0, T)$.

\vs 

The body of the work is organized as follows: In Section \ref{sect 2}, we introduce the relevant notation, definitions, and frequently referenced tools. In Section \ref{sect 3}, we derive Perelman's entropy as a limit of Colding's volume. Finally, in Section \ref{sect 4}, we demonstrate that Perelman's formula for the derivative of the entropy can be obtained as a limit of Colding's formula in $\hat{M}$.

\vs


\section{Preliminaries}\label{sect 2}

Here we compile some results, observations, and definitions that will be frequently referenced throughout the work. We begin with a few remarks on Perelman's space, $(\hat{M}, \hat{g}).$

Strictly speaking, $\hat{g}$ is a metric that depends on $N$, but we will omit the subscript $N$ to simplify notation. In fact, we will omit the subscript $N$ in most parts of the work. For instance, the Ricci curvature of $\hat{g}$ will be denoted by $\hat{\Ric}$, the covariant derivative will be $\hat{\nabla}$, and so on. 

The metric coefficient $(1+Rr^{2}/N^{2})=(1+2\tau R/N)$ will appear often, so to simplify notation we define, 
\be
v := 1+\frac{Rr^{2}}{N^{2}}. 
\ee

Note that $\hat{g}$ is invariant under rotations in $\mathbb{R}^{N+1}$ and therefore, scalar quantities like the scalar curvature $\hat{R}$, or the norm of the Ricci tensor $|\hat{\Ric}|$, are also invariant under rotations and thus $\theta$-independent. For this reason, they will often be considered as functions on $(0,T)\times M$. For instance, $v$ can be viewed as a function on $\hat{M}$ or $(0,T)\times M$, depending on the context.

In \cite{Per02}, Perelman showed that $|\hat{\Ric}| = O(1/N)$, (see \cite{CZ06} for the full computation of the Christoffel symbols and curvature components). This means that the sequence of functions $N|\hat{\Ric}|$, as functions on $(0,T)\times M$, is uniformly bounded on compact sets. 

Regarding the definition of order, we will need the following.

\begin{Definition}
Let $i\geq 0$ be an integer. A sequence of real-valued functions $F_{N}(\tau, \theta,x)$ is an $O_0(1/N^{i})$  if for every $0<\tau_{1}<T$ there exists $K>0$ such that,
\be\nonumber
N^{i}|F_{N}(\tau, \theta,x)|\leq K, 
\ee 
for all $N>0$ and for all $(\tau, \theta, x)$ such that $\tau_{1}\leq \tau\leq T$. 

We say that $F_N(\tau,\theta,x)$ is an $O_k(1/N^i)$   if $\partial^{\alpha}F_N$ is an $O_0(1/N^i)$,  for any multi-index $|\alpha|\leq k$, where the derivatives are taken in the $\tau$ or $x^i$ variables.
\end{Definition}

\begin{Definition}
    We say that a sequence of real-valued functions $F_N(\lambda)$, defined in $(0,T)$, is an $O_0(e^{-cN})$ if for each $\lambda_0 \in (0,T)$, given $\lambda_1>\lambda_0$, there exist $K>0$ and $c>0$ such that, 
    \be
    e^{cN}|F_{N}(\lambda)|\leq K,
    \ee
    for all $N>0$ and for all $\lambda_1 \leq \lambda \leq T.$
     We say that $F_N(\lambda)$ is an $O_k(e^{-cN})$ if $d^jF_N/d\lambda^j$ is an $O_0(e^{-cN})$ for every $j \leq k$.
\end{Definition}

A direct application of the Taylor expansion with the remainder in integral form allows us to derive the following lemma, which we will frequently reference.

\begin{Lemma}\label{lemma order} Let $F:(a,b)\rightarrow \mathbb{R}$ be a smooth real-valued function. Let $w(\tau,x,N)$ be a smooth real-valued function with range in $(a,b)$, such that,
\be\nonumber
w(\tau,x,N) = w_{0}(\tau,x) + \delta(\tau,x,N),
\ee
where,
\be\nonumber
\delta(\tau,x,N) = O_{k}(\frac{1}{N^{j}}),
\ee
for some integers $k\geq 0$ and $j\geq 0$. Then, for any $l\geq 1$, $F(w(\tau,x,N))$ has the following decomposition $($where we omit the $\tau,x$ and $N$ dependence for notational convenience$)$,
\be\nonumber
F(w) = F(w_{0}) + F'(w_{0})\delta + F''(w_{0})\frac{\delta^{2}}{2}+\ldots+F^{(l-1)}(w_{0})\frac{\delta^{l-1}}{(l-1)!} + R_{l},
\ee
and,
\be\nonumber
R_{l}(\tau,x,N) = O_{k}(\frac{1}{N^{jl}}).
\ee
\end{Lemma}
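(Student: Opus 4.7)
The plan is to apply Taylor's theorem with integral remainder to $F$ centered at $w_0$. For a smooth $F$ on $(a,b)$ and any $w,w_0\in (a,b)$, the identity
\[
F(w) \;=\; \sum_{i=0}^{l-1} \frac{F^{(i)}(w_0)}{i!}(w-w_0)^{i} \;+\; \frac{(w-w_0)^l}{(l-1)!}\int_0^1 (1-t)^{l-1} F^{(l)}\bigl(w_0 + t(w-w_0)\bigr)\, dt
\]
holds. Substituting $w-w_0=\delta$ produces exactly the claimed polynomial part together with
\[
R_l(\tau,x,N) \;=\; \frac{\delta^l}{(l-1)!}\int_0^1 (1-t)^{l-1} F^{(l)}(w_0 + t\delta)\, dt,
\]
so the content of the lemma reduces to the bound $R_l = O_k(1/N^{jl})$.

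For the $O_0$ estimate, fix $\tau_1\in (0,T)$. The continuous function $w_0$ maps the compact set $[\tau_1,T]\times M$ into a compact subset $K\Subset (a,b)$, and since $\delta=O_0(1/N^j)$ is uniformly small for $N$ large, the segment $\{w_0+t\delta:t\in [0,1]\}$ stays inside a slightly enlarged compact $K'\Subset (a,b)$. Hence $F^{(l)}$ is uniformly bounded along this segment, the integral factor is uniformly bounded in $(\tau,x,N)$, and $|\delta^l|=O_0(1/N^{jl})$, which together yield $|R_l|=O_0(1/N^{jl})$.

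For $k\geq 1$, differentiate under the integral sign, which is legitimate by smoothness and the compactness of $[0,1]$. By the Leibniz rule applied to the product $\delta^l\cdot(\text{integral})$, and the chain rule for higher derivatives applied to $F^{(l)}(w_0+t\delta)$, every partial derivative $\partial^{\alpha}R_l$ with $|\alpha|\leq k$ expands as a finite sum of terms, each the product of $l$ factors of the form $\partial^{\beta}\delta$ with $|\beta|\leq k$ (arising from differentiating $\delta^l$) and an integral factor built from derivatives $F^{(l+j')}$ evaluated along $w_0+t\delta$ together with partial derivatives of $w_0$ and $\delta$ of order at most $k$. Each factor of the first type is $O_0(1/N^j)$ by the hypothesis $\delta=O_k(1/N^j)$, so their product contributes $O_0(1/N^{jl})$; the factors inside the integral are uniformly bounded on $[\tau_1,T]\times M$ by compactness together with the smoothness of $F$ and $w_0$ and the $O_k$ bound on $\delta$. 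Hence $\partial^{\alpha}R_l=O_0(1/N^{jl})$ for all $|\alpha|\leq k$, i.e., $R_l=O_k(1/N^{jl})$.

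The only delicate point in the argument, and the place where a little care is needed, is verifying that the full segment $w_0+t\delta$ remains in a fixed compact subset of $(a,b)$ uniformly in all variables, so that $F$ and all its higher derivatives are uniformly bounded along the integration path. Once this is settled by the compactness of $[\tau_1,T]\times M$ and the smallness of $\delta$ for large $N$, the rest is routine Leibniz/chain-rule bookkeeping.
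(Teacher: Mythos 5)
Your proof is correct and follows exactly the route the paper intends: the paper derives this lemma by ``a direct application of the Taylor expansion with the remainder in integral form,'' which is precisely your expansion about $w_0$ with the remainder $R_l=\frac{\delta^l}{(l-1)!}\int_0^1(1-t)^{l-1}F^{(l)}(w_0+t\delta)\,dt$, followed by the same compactness and Leibniz/chain-rule bookkeeping. The only point you gloss over is that the finitely many small values of $N$ (where $\delta$ need not be small) must be absorbed into the constant via continuity on the compact set $[\tau_1,T]\times M$, but this is immediate.
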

Note that the usual rules of orders hold: the sum  $O_k(1/N^{i})+O_l(1/N^{j})$ is an $O_{\min {(k,l)}}(1/N^{\min{(i,j)}})$ and the product $O_k(1/N^{i})O_l(1/N^{j})$ is an $O_{\min {(k,l)}}(1/N^{i+j})$. For instance, in Lemma \ref{lemma order}, if $\delta=O_k(1/N^j),$ then $\delta^{i}=O_k(1/N^{ji}).$ 

As an example of Lemma \ref{lemma order}, $$v=1+ 2\tau R/N= 1+\delta,  \text{\, where \,} \delta= 2\tau R/N =  O_k(1/N),$$ and  therefore, $$1/\sqrt{v} =1-\tau R/N + O_2(\frac{1}{N^2}).$$ 
We observe that, while as in this and other cases,  $\delta= O_k(1/N)$ for any $k\geq 0$, for our purposes it will be enough to use $k \leq 2$. Similarly, $$e^{f/(m-2)} = 1+ \frac{f}{N}+ O_2(\frac{1}{N^2}) = 1 + O_2(\frac{1}{N}).$$
We will not delve into detail for most order computations, as they primarily involve combinations of compositions and products of the previous examples, along with straightforward applications of Lemma \ref{lemma order}.

Now we proceed to make some key observations about the level sets of $b$.  Observe that, since $b/\sqrt{2N} = \sqrt{\tau} e^{f/(m-2)}$, for every $k\geq 0$ we have,
\be\label{FOL}
\frac{b}{\sqrt{2N}} = \sqrt{\tau} +O_{k}(\frac{1}{N}).
\ee
Therefore, for every small $\delta>0$ and $k\geq 0$, the sequence of functions $$b/\sqrt{2N}: [\delta,T-\delta]_{\tau}\times M_{x} \rightarrow \mathbb{R}$$ converges in $C^{k}$ to the function (coordinate) $\tau:[\delta,T-\delta]\times M\rightarrow \mathbb{R}$. By standard calculus, it follows that there exists $N_{0}>0$ such that for  every $N > N_{0}$ and $\lambda \in [2\delta,T-2\delta]$, the level set $b/\sqrt{2N} = \sqrt{\lambda}$ is given by an immersion
\be\label{implicit map}
x\in M\mapsto (\phi_{N,\lambda}(x),x)\in (0,T)\times M,
\ee
for some smooth function $\phi_{N,\lambda}:M\rightarrow \mathbb{R}$. Furthermore, from (\ref{FOL}) we get,
\be\label{order implicit map}
\lambda = \frac{b^2}{2N}(\phi_{N,\lambda}(x),x) = \tau (\phi_{N,\lambda}(x),x) + O_k(\frac{1}{N}) = \phi_{N,\lambda}(x) + O_{k}(\frac{1}{N}),
\ee
that is, the immersion approaches the level set $\tau=\lambda$.
Since we will be interested in taking limits, we will always assume that $N$ is sufficiently big such that (\ref{implicit map}) holds inside a region $\lambda \in [\delta, T-\delta]$, where $\delta$ is sufficiently small. 

\vs
We conclude the section by defining $\mathcal{A}_N,$ $\mathcal{V}_N$ and $\mathcal{W}_N$.

\begin{Definition}\label{quantities}
    Let $$c_N := (4\pi)^{-n/2}(2N)^{n/2+1}/4|\mathbb{S}^N|.$$
    We define the area $\mathcal{A}_N:(0,\sqrt{2NT}) \to \mathbb{R} $ $($resp. the raw area  $\overline{\mathcal{A}}_N:(0,\sqrt{2NT}) \to \mathbb{R} $$)$ as,
    \be\label{def A_N}
    \mathcal{A}_N(s)= \frac{c_N}{s^{m-1}} \int_{b=s} (|\hat{\nabla}b|^2-1)|\hat{\nabla{b}}|\, d\hat{A}, \,\, \left( \mbox{resp. } \overline{\mathcal{A}}_N(s)= \frac{c_N}{s^{m-1}} \int_{b=s} |\hat{\nabla}b|^3\, d\hat{A} \right),
    \ee
    the volume $\mathcal{V}_N :(0,\sqrt{2NT}) \to \mathbb{R} $ as,
    \be\label{def V_N}
    \mathcal{V}_N (s)= \frac{c_N}{s^m}\int_{ b \leq s} (|\hat{\nabla}b|^2-1)|\hat{\nabla{b}}|^2\, d\hat{V},
    \ee
    and the monotonic volume $\mathcal{W}_N:(0,\sqrt{2NT}) \to \mathbb{R} $ as,
    \be\label{def W_N}
    \mathcal{W}_N (s) =   (2(m-1) \mathcal{V}_N  - \mathcal{A}_N)(s).
    \ee
\end{Definition}

\begin{Remark}\label{Remark} We always associate $\lambda \in(0,T)$ with $s=\sqrt{2N\lambda} \in (0,\sqrt{2NT})$.
    Quantities that are functions of $s$, for instance $\mathcal{A}_{N}(s)$, will be considered sometimes as functions of $\lambda \in (0,T)$, in which case we will write, for example, $\mathcal{A}_{N}(\lambda)$ instead of $\mathcal{A}_{N}(s=\sqrt{2N\lambda})$. Convergence of $\mathcal{A}_{N}$ or  $\mathcal{V}_{N}$ to the entropy $\mathcal{W}$ occurs when considering them as functions of $\lambda \in (0,T)$.
\end{Remark}



\section{Derivation of Perelman's Entropy from Colding's Monotonic Volume}\label{sect 3}

We will make repeated use of the following results.
\begin{Lemma}
Let $f = f(\tau,x)$ be any smooth real-valued function, that we consider as a function on $\hat{M}$. Then,
\be\label{expression for grad b2}
|\hat{\nabla} b|^{2} = 1 + \frac{1}{N}\left( 2f - 2\tau R + 2\tau |\nabla f|^{2} + 4\tau \partial_{\tau} f\right) +O_2(\frac{1}{N^{2}}).
\ee
In particular, if $f$ satisfies $($\ref{BACKCHO}$)$, then,
\be\label{ZERRO}
\frac{N}{2}(|\hat{\nabla}b|^{2} - 1) = \tau(2\Delta f - |\nabla f|^{2} + R) + f - n + O_2(\frac{1}{N}),
\ee
and therefore,
\be\label{GRADORD}
|\hat{\nabla}b| = 1 + O_2(\frac{1}{N}).
\ee

\begin{proof}
We compute,
\be\label{grad b}
\hat{\nabla} b = \frac{e^{f/(m-2)}}{\sqrt{v}}\left(1+\frac{r\partial_{r}f}{m-2}\right)\frac{\partial_{r}}{\sqrt{v}} + \frac{re^{f/(m-2)}}{m-2}\nabla f.
\ee
Thus,
\be\label{SR}
|\hat{\nabla} b|^{2} = \frac{e^{2f/(m-2)}}{v}\left(1+\frac{r\partial_{r}f}{m-2}\right)^{2} + \frac{r^{2}e^{2f/(m-2)}}{(m-2)^{2}}|\nabla f|^{2}.
\ee

Now, $r\partial_{r}f = 2\tau\partial_{\tau}f$. Therefore,
\be
\left(1+\frac{r\partial_{r}f}{m-2}\right)^{2} = 1 + \frac{4\tau \partial_{\tau} f}{N} + O_2(\frac{1}{N^{2}}).
\ee
Also, by virtue of Lemma \ref{lemma order} (see also the discussion below the Lemma),
\be\nonumber
\frac{e^{2f/(m-2)}}{v} = \left(1+\frac{2f}{m-2} + O_2(\frac{1}{N^{2}})\right) \left(1 - \frac{2\tau R}{N} + O_2(\frac{1}{N^{2}})\right) = 1+\frac{2f}{N} - \frac{2\tau R}{N} + O_2(\frac{1}{N^{2}}),
\ee
and similarly,
\be\nonumber
\frac{r^{2}e^{2f/(m-2)}}{(m-2)^{2}}|\nabla f|^{2} = \frac{2N\tau}{(m-2)^{2}} e^{2f/(m-2)}|\nabla f|^{2} = \frac{2\tau |\nabla f|^{2}}{N} + O_2(\frac{1}{N^{2}}).
\ee
After putting altogether inside (\ref{SR}), we obtain (\ref{expression for grad b2}).
Finally, we can replace $\partial_{\tau}f$ by (\ref{BACKCHO}) to get (\ref{ZERRO}).
\end{proof}

\end{Lemma}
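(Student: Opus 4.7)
The plan is a direct computation: evaluate $\hat{\nabla}b$ from $b = re^{f/(m-2)}$, square it against $\hat{g}$, and then expand systematically in powers of $1/N$ using Lemma \ref{lemma order}. Since $b$ depends only on $r$ and $x$ (not on $\theta$), and the metric splits block-diagonally as $r^{2}g_{\mathbb{S}^N} + v\,dr^{2} + g$, the norm squared of the gradient breaks into a radial piece $v^{-1}(\partial_{r}b)^{2}$ and an $M$-piece $|\nabla b|_{g}^{2}$, with no cross terms and no contribution from the sphere factor.

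First I would use the chain rule through $\tau = r^{2}/(2N)$ to obtain $\partial_{r}f = (r/N)\partial_{\tau}f$, and hence $\partial_{r}b = e^{f/(m-2)}\bigl(1+2\tau\partial_{\tau}f/(m-2)\bigr)$, while the $M$-component of $\hat{\nabla}b$ is $\tfrac{r}{m-2}e^{f/(m-2)}\nabla f$. This gives
\[
|\hat{\nabla}b|^{2} = \frac{e^{2f/(m-2)}}{v}\Bigl(1+\frac{2\tau\partial_{\tau}f}{m-2}\Bigr)^{2} + \frac{r^{2}e^{2f/(m-2)}}{(m-2)^{2}}|\nabla f|^{2}.
\]
Next I would expand each factor using the two elementary identities $1/(m-2) = 1/N + O(1/N^{2})$ and $r^{2}/N = 2\tau$, together with Lemma \ref{lemma order} applied to the smooth functions $w\mapsto 1/(1+w)$, $w\mapsto e^{w}$ and $w\mapsto(1+w)^{2}$ at arguments of size $O(1/N)$. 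The radial term contributes $1 + \tfrac{1}{N}\bigl(2f - 2\tau R + 4\tau\partial_{\tau}f\bigr) + O_{2}(1/N^{2})$, the tangential term contributes $\tfrac{2\tau}{N}|\nabla f|^{2} + O_{2}(1/N^{2})$, and adding them produces (\ref{expression for grad b2}).

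The second claim (\ref{ZERRO}) then follows by substituting $\partial_{\tau}f$ from (\ref{BACKCHO}) into the $1/N$-coefficient and collecting terms: the two $R$-contributions combine to $+2\tau R$, the two $|\nabla f|^{2}$-contributions combine to $-2\tau|\nabla f|^{2}$, the $\Delta f$-term yields $+4\tau\Delta f$, and $4\tau\cdot(-n/(2\tau)) = -2n$; multiplying by $N/2$ then gives exactly $\tau(2\Delta f - |\nabla f|^{2} + R) + f - n + O_{2}(1/N)$. The third claim (\ref{GRADORD}) is immediate from (\ref{expression for grad b2}) by one further application of Lemma \ref{lemma order} to the square-root function at $w_{0}=1$. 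The main obstacle is purely bookkeeping: one must keep the order arithmetic $O_{k}(1/N^{i})\cdot O_{l}(1/N^{j}) = O_{\min(k,l)}(1/N^{i+j})$ stated after Lemma \ref{lemma order} clearly in mind, so that the factors $r^{2}=2N\tau$ and $1/(m-2)^{2}$ do not inadvertently promote a hidden $1/N^{2}$ contribution up to order $1/N$.
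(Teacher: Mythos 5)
Your proposal is correct and follows essentially the same route as the paper: compute $\hat{\nabla}b$ using the block-diagonal structure of $\hat{g}$, split $|\hat{\nabla}b|^{2}$ into the radial term $v^{-1}(\partial_{r}b)^{2}$ and the $M$-term, and expand each factor via Lemma \ref{lemma order} together with $r\partial_{r}f=2\tau\partial_{\tau}f$ and $r^{2}=2N\tau$, then substitute (\ref{BACKCHO}) to get (\ref{ZERRO}) and take a square root for (\ref{GRADORD}). The only difference is cosmetic (you substitute $2\tau\partial_{\tau}f$ for $r\partial_{r}f$ from the outset), and your order bookkeeping matches the paper's.
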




\begin{Proposition}\label{Vol Element}
 For any $\lambda \in (0,T)$, the volume element for the level set $b=s=\sqrt{2N\lambda}$ can be expressed as,
    \be\label{Vol Elm in s}
    d\hat{A}= s^N e^{-f(\lambda, x)}(1+O_2(\frac{1}{N}))d\nu d\nu_{\mathbb{S}^N}, 
    \ee
    where $d\nu_{\mathbb{S}^N}$ is the standard volume element in $\mathbb{S}^N$, and $d\nu$ is the volume element in $(M,g(\lambda))$.
    In particular, we have
    \be\label{Vol elm in lambda}
    d\hat{A}= (2N\lambda)^{N/2} e^{-f(\lambda,x)}(1+O_2(\frac{1}{N}))d\nu d\nu_{\mathbb{S}^N}. 
    \ee
    \begin{proof}
       Since $\lambda \in (0,T),$ there exists $\delta>0$ such that $\lambda \in [\delta,T-\delta]$. Then, using (\ref{implicit map}),
        for every $N>N_0$ we define a map $\psi_{N,\lambda} :  \mathbb{S}^N \times M \to \hat{M}$ as
        \be 
        \psi_{N,\lambda}(\theta, x) = (\sqrt{2N\phi_{N,\lambda}}(x), \theta, x).
        \ee
     Let $\theta^{\alpha}$ be coordinates in $\mathbb{S}^{N}$ and $x^{i}$ coordinates on $M$. Since $r=\sqrt{2N\phi_{N,\lambda}}(x)$ and by (\ref{order implicit map}) we have $\phi_{N,\lambda}=\lambda+O_k(N^{-1})$,  we compute, 
    \begin{align*}
        \psi_{N,\lambda}^* \hat{g} (\partial_i, \partial_j) &  = \hat{g}(d\psi_{N,\lambda} (\partial_i),d\psi_{N,\lambda}(\partial_j)) \\ &= \hat{g}\left(\frac{\sqrt{N}}{\sqrt{2\phi_{N,\lambda} }}\partial_i\phi_{N,\lambda}\partial_r+\partial_i, \frac{\sqrt{N}}{\sqrt{2\phi_{N,\lambda} }}\partial_j\phi_{N,\lambda}\partial_r+\partial_j\right) \\ & = g_{ij} + O_2(\frac{1}{N}),\\
       \psi_{N,\lambda}^* \hat{g} (\partial_{\alpha}, \partial_{\beta}) & = \hat{g}(\partial_\alpha, \partial_\beta) = (2N\phi_{N,\lambda})g_{\mathbb{S}^N \alpha \beta}, \\
        \psi_{N,\lambda}^* \hat{g} (\partial_{\alpha}, \partial_{i})& =0,
    \end{align*}
    for every $N>N_0$.
    Therefore, we can express the volume element $d\hat{A}$ as,
    \be\nonumber
    d\hat{A}= (2N\phi_{N,\lambda})^{N/2}(1+O_2(\frac{1}{N}))d\nu d\nu_{\mathbb{S}^N},
    \ee
    where we are using Lemma \ref{lemma order} when computing $\sqrt{\psi_{N,s}^* \hat{g}}$. 
    Since $r^2/2N=\tau= \phi_{N,\lambda}(x)$ for any  $(r,\theta,x)\in b=s$, we have,
    \be\label{expansion r}
    r=\sqrt{2N\phi_{N,\lambda}}(x)  = se^{f(\phi_{N,\lambda}(x),x)/(2-m)},
    \ee
    and by (\ref{order implicit map}) and Lemma \ref{lemma order} we can write,
    \be\nonumber
    \begin{split}
    (2N\phi_{N,\lambda})^{N/2} &= s^N e^{-f(\lambda+O_k(1/N),x)+O_2(1/N)}\\
    &=s^N e^{-f(\lambda ,x)}(1+O_2(\frac{1}{N})),
    \end{split}
    \ee
    which completes the proof.
    \end{proof}
\end{Proposition}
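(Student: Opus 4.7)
The plan is to parametrize the level set $\{b=s\}$ by lifting the implicit-function graph $\tau=\phi_{N,\lambda}(x)$ from (\ref{implicit map}) to $\hat M$ using the rotational symmetry of $\hat g$, and then to pull back $\hat g$ and extract its induced volume element. Concretely, I would work with the map $\psi_{N,\lambda}:\mathbb{S}^{N}\times M\to\{b=s\}\subset\hat{M}$ defined by $\psi_{N,\lambda}(\theta,x)=(\sqrt{2N\phi_{N,\lambda}(x)},\theta,x)$, which is a diffeomorphism for $N$ large by (\ref{implicit map}) and (\ref{order implicit map}).

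The key geometric observation is that $\psi_{N,\lambda}^{*}\hat g$ is block-diagonal in the coordinates $(\theta^{\alpha},x^{i})$: the $\theta\theta$-block equals $(2N\phi_{N,\lambda}(x))g_{\mathbb{S}^{N}}$; the $\theta x$-block vanishes because $\partial_{\alpha}\phi_{N,\lambda}=0$ and $\hat g$ has no $dr\,d\theta^{\alpha}$ or $d\theta^{\alpha}dx^{i}$ cross terms; and the $xx$-block reduces to $g_{ij}+O_{2}(1/N)$ once the $\partial_{i}\phi_{N,\lambda}\,\partial_{r}$ contribution to $d\psi_{N,\lambda}(\partial_{i})$ is estimated via (\ref{order implicit map}) (the prefactor $v\,N/(2\phi_{N,\lambda})$ is $O(N)$ while $\partial_{i}\phi_{N,\lambda}\partial_{j}\phi_{N,\lambda}$ is $O_{2}(1/N^{2})$). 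Taking the square root of the determinant factored over these blocks yields
\be\nonumber
d\hat A=(2N\phi_{N,\lambda}(x))^{N/2}\bigl(1+O_{2}(1/N)\bigr)\,d\nu\,d\nu_{\mathbb{S}^{N}}.
\ee
To convert this into (\ref{Vol Elm in s}) I would use the defining identity $b=re^{f/(m-2)}=s$ to write $r=se^{-f(\phi_{N,\lambda}(x),x)/(m-2)}$ on the level set, so that $(2N\phi_{N,\lambda})^{N/2}=r^{N}=s^{N}e^{-Nf(\phi_{N,\lambda},x)/(m-2)}$.

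The delicate step, and the main obstacle, is the order bookkeeping inside this last exponential. Since $N/(m-2)=1-(n-1)/(m-2)=1+O(1/N)$, the explicit $N$ multiplying $f$ almost but not exactly cancels, and one has to rewrite $-Nf/(m-2)=-f+O_{2}(1/N)$ and then apply Lemma \ref{lemma order} to the exponential to pull the $O_{2}(1/N)$ deviation out as a $1+O_{2}(1/N)$ multiplicative factor. Composing with the Taylor expansion $f(\phi_{N,\lambda},x)=f(\lambda,x)+O_{k}(1/N)$ produced by (\ref{order implicit map}) and a further application of the same lemma, the argument of $f$ can be shifted from $\phi_{N,\lambda}(x)$ to $\lambda$ at the cost of another $O_{2}(1/N)$ error, which gives (\ref{Vol Elm in s}). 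The second expression (\ref{Vol elm in lambda}) is then immediate from $s^{N}=(2N\lambda)^{N/2}$.
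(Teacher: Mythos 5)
Your proposal is correct and follows essentially the same route as the paper: the same graph parametrization $\psi_{N,\lambda}$ of the level set, the same block-diagonal pullback computation giving $(2N\phi_{N,\lambda})^{N/2}(1+O_2(1/N))\,d\nu\,d\nu_{\mathbb{S}^N}$, and the same conversion via $b=re^{f/(m-2)}=s$ with Lemma \ref{lemma order} handling the $N/(m-2)=1+O(1/N)$ cancellation and the shift of the argument of $f$ from $\phi_{N,\lambda}(x)$ to $\lambda$. The only difference is that you spell out the exponent bookkeeping slightly more explicitly than the paper does.
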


We now demonstrate that Perelman's $\mathcal{W}$-entropy and its derivative emerge as the limits of the area $\mathcal{A}_N$ and its derivative, respectively, when considered as functions of $\lambda$ (see Remark \ref{Remark}).
\begin{Theorem}\label{WN convergence}
The following equality holds:
\be\label{W=A_N+O}
\mathcal{A}_N (\lambda)=\mathcal{W} (\lambda)+ O_2(\frac{1}{N}),
\ee
where $\mathcal{W}$ is Perelman's entropy functional $($\ref{Def W}$)$.
In particular,
\be
\mathcal{A}_N \to \mathcal{W},
\ee
and   
\be
\frac{d\mathcal{A}_N}{d\lambda} \to \frac{d\mathcal{W}}{d\lambda},
\ee
uniformly on compact sets in $(0,T)$.
\begin{proof}
Let $s=\sqrt{2N\lambda}$. We compute,
\begin{align*}
\mathcal{A}_N & =\frac{c_N}{s^{m-1}}\int_{b=s} (|\hat{\nabla}b|^2-1)|\hat{\nabla b}| d\hat{A} \\
& = (4\pi\lambda)^{-n/2}\int_M \left(\lambda(2\Delta f - |\nabla f|^{2} + R) + f -n \right)e^{-f(\lambda,x)}(1+O_2(\frac{1}{N}))d\nu\\
& = (4\pi\lambda)^{-n/2}\int_M \left(\lambda(2\Delta f - |\nabla f|^{2} + R) + f -n\right)e^{-f(\lambda,x)}d\nu + O_2(\frac{1}{N}),
\end{align*}
where we used (\ref{ZERRO})  and  (\ref{Vol elm in lambda}) together with the fact that the integrand does not depend on $\theta \in \mathbb{S}^N$. Since $M$ is a closed manifold,
\be\nonumber
\int_M (\Delta f - |\nabla f|^2)e^{-f}d\nu = -\int_M \Delta e^{-f}d\nu=0,
\ee
and we use it to rewrite, 
\be\nonumber
\mathcal{A}_N= (4\pi\lambda)^{-n/2}\int_M \left(\lambda(|\nabla f|^2+R)+f-n\right)e^{-f}d\nu + O_2(\frac{1}{N}),
\ee
from which (\ref{W=A_N+O}) follows. The previous expression also shows that $\mathcal{A}_N \to \mathcal{W}$ uniformly on compact sets in $(0,T)$, and differentiating with respect to $\lambda$ on both sides of (\ref{W=A_N+O}),
\be\label{Der WN}
\frac{d\mathcal{A}_N }{d\lambda}= \frac{d\mathcal{W}}{d\lambda} + O_1(\frac{1}{N}),
\ee
 which also shows that convergence of the derivatives is uniform on compact sets in $(0,T)$.
    
\end{proof}
\end{Theorem}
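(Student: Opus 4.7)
The plan is to substitute the three estimates already in hand—equation (\ref{ZERRO}) for $|\hat\nabla b|^2-1$, equation (\ref{GRADORD}) for $|\hat\nabla b|$, and Proposition \ref{Vol Element} for $d\hat A$—into the definition (\ref{def A_N}) of $\mathcal{A}_N$, and then watch the powers of $N$ collapse. First I would multiply (\ref{ZERRO}) by (\ref{GRADORD}) to obtain
\[
(|\hat\nabla b|^2-1)|\hat\nabla b| \;=\; \frac{2}{N}\bigl(\tau(2\Delta f-|\nabla f|^2+R)+f-n\bigr)\bigl(1+O_2(\tfrac{1}{N})\bigr) \;+\; O_2(\tfrac{1}{N^2}),
\]
where along the level set $\tau=\phi_{N,\lambda}(x)=\lambda+O_k(1/N)$ by (\ref{order implicit map}), so $\tau$ can be replaced by $\lambda$ at the cost of another $O_2(1/N)$. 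Multiplying by (\ref{Vol elm in lambda}) keeps this structure.

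Next I would check that the normalizing constant $c_N$ was chosen precisely so that the various $N$-powers cancel. With $s=\sqrt{2N\lambda}$ and $m-1=N+n$, a direct computation gives
\[
\frac{c_N}{s^{m-1}}\cdot(2N\lambda)^{N/2} \;=\; \frac{(4\pi)^{-n/2}(2N)^{n/2+1}}{4|\mathbb{S}^N|\,(2N\lambda)^{n/2}} \;=\; \frac{N}{2|\mathbb{S}^N|}\,(4\pi\lambda)^{-n/2}.
\]
The factor $N/2$ absorbs the $2/N$ coming from (\ref{ZERRO}), and the integration in $\theta$ produces $|\mathbb S^N|$ (since nothing depends on $\theta$), so that after collecting errors I obtain
\[
\mathcal{A}_N(\lambda) \;=\; (4\pi\lambda)^{-n/2}\!\int_M\!\bigl(\lambda(2\Delta f-|\nabla f|^2+R)+f-n\bigr)e^{-f(\lambda,x)}\,d\nu \;+\; O_2(\tfrac{1}{N}).
\]

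To turn the integrand into Perelman's integrand I would use the closedness of $M$: $\int_M \Delta(e^{-f})\,d\nu=0$ gives $\int_M(\Delta f-|\nabla f|^2)e^{-f}\,d\nu=0$, so $\int_M 2\Delta f\,e^{-f}\,d\nu=\int_M 2|\nabla f|^2 e^{-f}\,d\nu$, and the $2\Delta f-|\nabla f|^2$ in the bracket may be replaced by $|\nabla f|^2$. The result is exactly $\mathcal{W}(\lambda)$ plus $O_2(1/N)$, establishing (\ref{W=A_N+O}). Uniform convergence $\mathcal{A}_N\to\mathcal W$ on compact subsets of $(0,T)$ is then immediate from the definition of $O_0(1/N)$, and uniform convergence of the $\lambda$-derivatives follows from the $O_2$ (rather than $O_0$) quality of the error, which is what justifies differentiating (\ref{W=A_N+O}) in $\lambda$ to obtain (\ref{Der WN}).

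The only genuinely delicate step is the bookkeeping: one must make sure that every $N$-dependent quantity entering the integrand—namely $|\hat\nabla b|$, $|\hat\nabla b|^2-1$, the Jacobian factor, the prefactor $c_N/s^{m-1}$, and the evaluation of $\tau$ at $\phi_{N,\lambda}$—carries an $O_2$ (not merely $O_0$) error, so that all estimates survive one differentiation in $\lambda$. Lemma \ref{lemma order} together with (\ref{order implicit map}) and (\ref{Vol elm in lambda}) provides exactly this, so no further analytic input is needed.
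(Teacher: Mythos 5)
Your proposal is correct and follows essentially the same route as the paper's proof: substitute (\ref{ZERRO}), (\ref{GRADORD}) and the volume-element expansion (\ref{Vol elm in lambda}) into (\ref{def A_N}), let the choice of $c_N$ and the $\theta$-integration cancel the $N$-powers and the factor $|\mathbb{S}^N|$, convert $2\Delta f-|\nabla f|^2$ into $|\nabla f|^2$ via $\int_M \Delta(e^{-f})\,d\nu=0$ on the closed manifold $M$, and then differentiate the resulting $O_2(1/N)$ identity in $\lambda$ to get convergence of the derivatives. Your explicit check that $c_N s^{1-m}(2N\lambda)^{N/2}=\tfrac{N}{2|\mathbb{S}^N|}(4\pi\lambda)^{-n/2}$ and your remark on replacing $\tau=\phi_{N,\lambda}(x)$ by $\lambda$ at the cost of an $O_2(1/N)$ are exactly the bookkeeping the paper performs implicitly, so no further input is needed.
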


We now proceed to study the convergence of the volume. To begin, we introduce a lemma.
 

\begin{Lemma}\label{Lemma vol}
Let $\lambda_0 <\lambda \in (0, T)$. Define $\bar{s}=\sqrt{2N\lambda_0}$ and  $s=\sqrt{2N\lambda}$. Then,
    \be\label{decaying term in V_N}
        \mathcal{V}_N(s) = \frac{c_N}{s^m}\int_{\bar{s} \leq b \leq s} (|\hat{\nabla}b|^2-1)|\hat{\nabla}b|^2 \,d\hat{V} + O_1(e^{-cN}).
        \ee
    \begin{proof}
        We write, 
        \be\nonumber
        \mathcal{V}_N(s) = \frac{c_N}{s^m}\int_{\bar{s} \leq b \leq s} (|\hat{\nabla}b|^2-1)|\hat{\nabla}b|^2 \, d\hat{V} +\frac{c_N}{s^m}\int_{0 \leq b \leq \bar{s}} (|\hat{\nabla}b|^2-1)|\hat{\nabla}b|^2 \, d\hat{V}.
        \ee
        Since $f=-\ln u -(n/2)\ln \tau,$ a straightforward computation using (\ref{grad b}) shows that we can write $|\hat{\nabla} b|^2$ in terms of $u$ as,
        \be\nonumber
        \begin{split}
        |\hat{\nabla} b|^2  &=(2N)^{n/(m-2)}(ur^n)^{2/(2-m)}\left[ \left(1 - \frac{2\tau \partial_\tau u -nu}{u(m-2)}\right)^2 + \frac{r^2}{(m-2)^2}\frac{|\nabla u|^2}{u^2}\right]\\
    &= (2N)^{n/(m-2)}(ur^n)^{2/(2-m)}H_N(r^2/2N,x),
        \end{split}
        \ee  
        and, since $u$ is everywhere positive in $[0,\lambda_0] \times M$ and its first derivatives are bounded, it follows that $|H_N| \leq C$  and $0<c_1<u<c_2$ in $[0,\lambda_0] \times M$, for some constants $C,c_1,c_2>0$. In particular, the same bounds hold for the region $0 \leq b \leq \bar{s}$, and since $d\hat{V}=r^{N}drd\nu d\nu_{\mathbb{S}^N},$ we use the  previous expression for $|\hat{\nabla} b|^2$ to find,
        \be\label{term in 0 bar s}
        \left| \frac{c_N}{s^m}\int_{0 \leq b \leq \bar{s}}|\hat{\nabla}b|^4 d\hat{V} \right| \leq K_1\left(\frac{\bar{s}}{s}\right)^{N+1},
        \ee
        for some constant $K_1>0$. Since $\bar{s}/s = \lambda_0/\lambda <1$, it decays exponentially fast.  Similarly, 
        \be\label{term in norm of Vn bar s}
        \left| \frac{c_N}{s^m}\int_{0 \leq b \leq \bar{s}}|\hat{\nabla}b|^2 d\hat{V} \right| \leq K_2\left(\frac{\bar{s}}{s}\right)^{N+1},
        \ee
        for some constant $K_2>0$, which shows
        \be\nonumber
        \mathcal{V}_N = \frac{c_N}{s^m}\int_{\bar{s} \leq b \leq s} (|\hat{\nabla}b|^2-1)|\hat{\nabla}b|^2 \,d\hat{V} + O_0(e^{-cN}).
        \ee
        Finally, in order to see that the term $O_0(e^{-cN})$ is in fact $O_1(e^{-cN})$, we note that
        \be\nonumber
        \begin{split}
        \frac{d}{d\lambda}\left(\frac{c_N}{s^m}\int_{0 \leq b \leq \bar{s}}(|\hat{\nabla}b|^2-1)|\hat{\nabla}b|^2 \,d\hat{V} \right) &= \frac{ds}{d\lambda}\frac{d}{ds}\left(\frac{c_N}{s^m}\int_{\bar{0} \leq b \leq \bar{s}}(|\hat{\nabla}b|^2-1)|\hat{\nabla}b|^2 \,d\hat{V} \right) \\
        &=-\left(\frac{N}{2\lambda}\right)^{1/2}\frac{m}{s}\frac{c_N}{s^m}\int_{\bar{0} \leq b \leq \bar{s}}(|\hat{\nabla}b|^2-1)|\hat{\nabla}b|^2 d\hat{V} ,
        \end{split}
        \ee
        and apply (\ref{term in 0 bar s}) and (\ref{term in norm of Vn bar s}) to see that its derivative is also an $O_0(e^{-cN})$.
    \end{proof}
\end{Lemma}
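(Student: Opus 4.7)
The plan is to split $\mathcal{V}_N(s)$ as a sum of an ``outer'' contribution over the annular shell $\{\bar s\le b\le s\}$ and an ``inner'' contribution over the ball $\{0\le b\le\bar s\}$, and then to prove that the inner contribution (together with its $\lambda$-derivative) is $O_1(e^{-cN})$ for some $c>0$. The heuristic reason for the exponential decay is that, in polar coordinates on $\mathbb{R}^{N+1}$, $d\hat V = r^N\,dr\,d\nu\,d\nu_{\mathbb{S}^N}$; integrating $r^N$ from $0$ to a quantity comparable to $\bar s$ and dividing by $s^m$ produces a factor of order $(\bar s/s)^{N+1}=(\lambda_0/\lambda)^{(N+1)/2}$, and $\lambda_0/\lambda<1$ by hypothesis.

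The main obstacle, which I would address first, is that the asymptotic expansion (\ref{expression for grad b2}) is useless on the inner region, because $f=-\ln u-(n/2)\ln\tau$ blows up as $\tau\to 0^+$ and the $O_2(1/N)$ control there is only uniform on compact subsets of $(0,T)$. To bypass this, I would recast $|\hat\nabla b|^2$ in the variable $u=\tau^{-n/2}e^{-f}$, using $b=h^{1/(2-m)}$ and $h=(2N)^{-n/2}u\,r^{n-(m-2)}$. Starting from (\ref{grad b}) and substituting $f=-\ln u-(n/2)\ln\tau$ yields a formula of the schematic form
\[
|\hat\nabla b|^2 = (2N)^{n/(m-2)}(ur^n)^{2/(2-m)}\,H_N(\tau,x),
\]
where $H_N$ is a rational expression in $u$, $\partial_\tau u$, and $\nabla u$. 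The key gain is that $u$ is a positive solution to a uniformly parabolic equation on the \emph{closed} cylinder $[0,\lambda_0]\times M$, so by the strong maximum principle and standard parabolic regularity there exist constants with $0<c_1\le u\le c_2$ and $|H_N|\le C$ uniformly on $[0,\lambda_0]\times M$. The only remaining $r$-dependence of $|\hat\nabla b|^{2k}$ is the polynomial factor $r^{-2kn/(m-2)}$.

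Inserting these bounds into the inner integral and passing to polar coordinates, the inner contribution reduces to an $r$-integral of the form $\int_0^{C\bar s} r^{N-2kn/(m-2)}\,dr$ for $k=1,2$ (the bound $r\le C\bar s$ on the inner region follows from $r=be^{f/(2-m)}$ and the uniform control on $u$ for $\tau\in[0,\lambda_0]$). This is comparable to $\bar s^{N+1}$ up to polynomial factors in $N$. Multiplying by the prefactor $c_N/s^m$ and collecting powers, everything reorganizes as $K(\bar s/s)^{N+1}$ times at most a polynomial in $N$, which is $O_0(e^{-cN})$ since $\bar s/s=\sqrt{\lambda_0/\lambda}<1$.

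For the $O_1$ upgrade, I would exploit that $\bar s$ is fixed while $s=\sqrt{2N\lambda}$ is the only $\lambda$-dependent quantity in the inner integral: the region $\{0\le b\le\bar s\}$ does not sweep as $\lambda$ varies, so differentiation in $\lambda$ only acts on the $1/s^m$ prefactor and produces an extra polynomial factor in $N$, which is absorbed by the exponential. Once the substitution from $f$ to $u$ is carried out, every subsequent estimate is routine; the rest is just geometric decay of $(\lambda_0/\lambda)^{N/2}$ dominating polynomial prefactors.
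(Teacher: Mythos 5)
Your proposal follows essentially the same route as the paper: the same splitting into the shell $\{\bar s\le b\le s\}$ and the inner region $\{0\le b\le\bar s\}$, the same rewriting of $|\hat\nabla b|^2$ in terms of $u$ (precisely because the $O(1/N)$ expansion of $f$ degenerates as $\tau\to 0^+$), the same uniform bounds on $u$ and $H_N$ on $[0,\lambda_0]\times M$ leading to the $(\bar s/s)^{N+1}$ decay after integrating $r^N\,dr$, and the same observation that the inner domain is $\lambda$-independent so differentiation only hits the $s^{-m}$ prefactor, giving the $O_1$ upgrade. It is correct as an outline and matches the paper's argument.
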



\begin{Proposition}\label{derivative Colding}
The following equality holds,
    \be\label{der monot colding}
   2(m-1) \mathcal{V}_N  - \mathcal{A}_N= \mathcal{A}_N + O_1(\frac{1}{N}).
    \ee
    In particular, 
    \be\label{derivative non normalized monotonicity Colding}
    \frac{d}{ds}(2(m-1) \mathcal{V}_N  - \mathcal{A}_N) =  \frac{d}{ds}\mathcal{A}_N + O_0(\frac{1}{N^{3/2}}).
    \ee
    \begin{proof}
        Let $\lambda \in (0, T)$, and define $s = \sqrt{2N\lambda}$. Next, we choose $\lambda_0 < \lambda$ and define $\bar{s} = \sqrt{2N\lambda_0}$. By the coarea formula and equation (\ref{decaying term in V_N}) we obtain,
        \be\nonumber
        \mathcal{V}_N = \frac{c_N}{s^m}\int_{\bar{s} \leq b \leq s} (|\hat{\nabla}b|^2-1)|\hat{\nabla}b|^2 d\hat{V} +O_1(e^{-cN}) = \frac{1}{s^m}\int_{\bar{s}}^s w^{m-1} \mathcal{A}_N(w) dw +O_1(e^{-cN}).
        \ee
    Using (\ref{Der WN}), we deduce,
        \be\label{dA_N/ds}
         \begin{split}
    \frac{d\mathcal{A}_N}{ds}&= \frac{d\lambda}{ds}\frac{d\mathcal{A}_N}{d\lambda}\\& =(4\pi)^{n/2}\frac{d\lambda}{ds}\left( \frac{d\mathcal{W}}{d\lambda} + \frac{d}{d\lambda}\left(O_2(\frac{1}{N})\right)\right)\\
    & = (4\pi)^{n/2}\left(\frac{2\lambda}{N}\right)^{1/2}\left(\frac{d\mathcal{W}}{d\lambda} + O_1(\frac{1}{N})\right)\\
    & = O_1(\frac{1}{N^{1/2}}),
    \end{split}
    \ee
    and,
    \be\label{second derivative A_N}
    \frac{d^2\mathcal{A}_N}{ds^2} = \frac{d\lambda}{ds}\frac{d}{d\lambda}\left( O_1(\frac{1}{N^{1/2}})\right) = O_0(\frac{1}{N}),
    \ee
    since  $d\mathcal{W}/d\lambda = O_2(1)$. 
         A Taylor expansion around $s$ shows,
        \be\nonumber
        \mathcal{A}_N(w)=\mathcal{A}_N(s) + (w-s)\frac{d\mathcal{A}_N}{ds}(\xi_{w,s}),
        \ee
        for some $\sqrt{2N\lambda_0} \leq \xi_{w,s} \leq \sqrt{2N\lambda}$. The bound on $\xi_{w,s}$ implies the respective bound for its associated $\tau-$coordinate between $\lambda_0$ and $\lambda$, and therefore by (\ref{dA_N/ds}),
        \be\nonumber
        \mathcal{A}_N(w)=\mathcal{A}_N(s) + (w-s)O_0(\frac{1}{N^{1/2}}),
        \ee
        for any $w\in [\bar{s},s]$. Now, we notice that, since $|O_0(N^{-1/2})|\leq CN^{-1/2}$,
        \be\label{Argument for o0 taylor}
        \begin{split}
         -\frac{s^{m+1}}{s^m(m+1)m}\frac{C}{N^{1/2}} + O_0(e^{-cN}) &\leq
        \frac{1}{s^m}\int_{\bar{s}}^s w^{m-1} \left((w-s)O_0(\frac{1}{N^{1/2}})\right) dw \\
         &\leq \frac{s^{m+1}}{s^m(m+1)m}\frac{C}{N^{1/2}} + O_0(e^{-cN}),
        \end{split}
        \ee
        which shows, 
        \be
         \frac{1}{s^m}\int_{\bar{s}}^s w^{m-1} \left((w-s)O_0(\frac{1}{N^{1/2}})\right) dw = O_0(\frac{1}{N^2}).
        \ee
        Then,
        \be\label{Taylor for V_N}
        \begin{split}
        \mathcal{V}_N& = \frac{1}{s^m}\int_{\bar{s}}^s w^{m-1} \left(\mathcal{A}_N(s)+(w-s)O_0(\frac{1}{N^{1/2}})\right) dw  +O_1(e^{-cN})\\
        & = \frac{1}{m}\mathcal{A}_N + O_0({\frac{1}{N^2}}).
        \end{split}
        \ee
        In order to see that the term $O_0(N^{-2})$ is actually an $O_1(N^{-2})$, we use a Taylor expansion of order $2$ to compute,
        \be\label{derivative for Taylor V_N}
        \begin{split}
        \frac{d\mathcal{V}_N}{ds} & = \frac{d}{ds}\left(\frac{1}{s^m}\int_{\bar{s}}^s w^{m-1} \mathcal{A}_N(w) dw\right) +O_0(e^{-cN}) \\
        & = -\frac{m}{s^{m+1}}\int_{\bar{s}}^s w^{m-1} \mathcal{A}_N(w) dw +  \frac{1}{s}\mathcal{A}_N(s)  +O_0(e^{-cN}) \\
    & = -\frac{m}{s^{m+1}}\int_{\bar{s}}^s w^{m-1} \left( \mathcal{A}_N(s) + (w-s)\frac{d}{ds}\mathcal{A}_N(s)+ \frac{(w-s)^2}{2}\frac{d^2\mathcal{A}_N}{ds^2}(\xi_{w,s}) \right)dw \\
    & \phantom{....}+  \frac{1}{s}\mathcal{A}_N(s)  +O_0(e^{-cN}),
        \end{split}
        \ee
    where $\xi_{w,s} \in [w,s].$ Now, since $d^2\mathcal{A}_N/ds^2 = O_0(1/N),$ we proceed as in $(\ref{Argument for o0 taylor})$ and show,
    \be
    \begin{split}
        \int_{\bar{s}}^s w^{m-1} &\left( \mathcal{A}_N(s) + (w-s)\frac{d\mathcal{A}_N}{ds}(s)+ \frac{(w-s)^2}{2}\frac{d^2\mathcal{A}_N}{ds^2}(\xi_{w,s}) \right)dw \\
        & = \frac{s^m}{m}\mathcal{A}_N(s) - \frac{s^{m+1}}{m(m+1)}\frac{d\mathcal{A}_N}{ds}(s) + \frac{s^{m+2}}{m(m+1)(m+2)}O_0(\frac{1}{N}) + O_0(e^{-cN}).
    \end{split}
    \ee
    Use this in (\ref{derivative for Taylor V_N}) and the fact that $d\mathcal{A}_N/ds = O_1(N^{-1/2})$ to write,
    \be
    \frac{d\mathcal{V}_N}{ds} = \frac{1}{m+1}\frac{d\mathcal{A}_N}{ds}(s) + O_0(\frac{1}{N^{5/2}}) = \frac{1}{m}\frac{d\mathcal{A}_N}{ds}(s) + O_0(\frac{1}{N^{5/2}}).
    \ee
     This implies,  
        \be\nonumber
        \frac{d\mathcal{V}_N}{d\lambda}=\frac{ds}{d\lambda}\frac{d\mathcal{V}_N}{ds} = \frac{1}{m}\frac{d\mathcal{A}_N}{d\lambda} + \frac{ds}{d\lambda}O_0(\frac{1}{N^{5/2}})=\frac{1}{m}\frac{d\mathcal{A}_N}{d\lambda} + O_0(\frac{1}{N^{2}}),
        \ee 
        which we combine with (\ref{Taylor for V_N}) to obtain,
        \be
        \mathcal{V}_N = \frac{1}{m}\mathcal{A}_N + O_1(\frac{1}{N^2}).
        \ee
        
        From here, a straightforward computation proves (\ref{der monot colding}). We finish the proof by differentiating (\ref{der monot colding}) with respect to $s$ and noting that
        \be\nonumber
        \frac{d}{ds}\left(O_1(\frac{1}{N})\right) = \frac{d\lambda}{ds}\frac{d}{d\lambda}\left(O_1(\frac{1}{N})\right) = O_0(\frac{1}{N^{3/2}}).
        \ee
    \end{proof}
\end{Proposition}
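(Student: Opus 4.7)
The plan is to exploit the coarea formula together with Lemma~\ref{Lemma vol} to reduce the bulk integral defining $\mathcal{V}_N$ to a one-dimensional average of $\mathcal{A}_N$. Slicing $(|\hat{\nabla}b|^2-1)|\hat{\nabla}b|^2\,d\hat{V}=(|\hat{\nabla}b|^2-1)|\hat{\nabla}b|\cdot|\hat{\nabla}b|\,d\hat{V}$ along the level sets of $b$, and using Lemma~\ref{Lemma vol} to discard the exponentially small contribution near $b=0$, gives, for any fixed $\lambda_0<\lambda$ with $\bar{s}=\sqrt{2N\lambda_0}$,
\[
\mathcal{V}_N(s) \;=\; \frac{1}{s^m}\int_{\bar{s}}^{s} w^{m-1}\,\mathcal{A}_N(w)\,dw \;+\; O_1(e^{-cN}).
\]
The whole problem then reduces to computing this moment integral to high enough accuracy and comparing the result to $\mathcal{A}_N(s)/m$.

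Next I would extract quantitative bounds on the $s$-derivatives of $\mathcal{A}_N$ from Theorem~\ref{WN convergence}. Since $\mathcal{A}_N(\lambda)=\mathcal{W}(\lambda)+O_2(1/N)$ and $d\mathcal{W}/d\lambda$ is bounded, the chain rule with $d\lambda/ds=s/N=O(1/\sqrt{N})$ gives $d\mathcal{A}_N/ds=O_1(1/\sqrt{N})$, and iterating once more yields $d^2\mathcal{A}_N/ds^2=O_0(1/N)$. A first-order Taylor expansion $\mathcal{A}_N(w)=\mathcal{A}_N(s)+(w-s)\,O_0(1/\sqrt{N})$ plugged into the coarea identity, together with the elementary moment
\[
\frac{1}{s^m}\int_{\bar{s}}^{s} w^{m-1}(w-s)\,dw \;=\; -\frac{s}{m(m+1)}+O(e^{-cN}),
\]
combined with $s\sim\sqrt{N}$ and $m(m+1)\sim N^2$, then gives the pointwise statement $\mathcal{V}_N=\mathcal{A}_N/m+O_0(1/N^2)$.

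To upgrade this to an $O_1(1/N^2)$ estimate, which is what one needs to control the derivative of $2(m-1)\mathcal{V}_N-\mathcal{A}_N$, I would differentiate the coarea identity in $s$ and Taylor expand $\mathcal{A}_N(w)$ to \emph{second} order around $w=s$. The two relevant moments scale like $s^{m+1}/(m(m+1))$ and $s^{m+2}/(m(m+1)(m+2))$; combined with $d^2\mathcal{A}_N/ds^2=O_0(1/N)$, the second produces an $O_0(1/N^{5/2})$ error, and replacing the natural coefficient $1/(m+1)$ by $1/m$ costs only an additional $O_0(1/N^{5/2})$ since $1/m-1/(m+1)\sim 1/N^2$. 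This yields $d\mathcal{V}_N/ds=(1/m)\,d\mathcal{A}_N/ds+O_0(1/N^{5/2})$, which after converting to $\lambda$ gives $d\mathcal{V}_N/d\lambda=(1/m)\,d\mathcal{A}_N/d\lambda+O_0(1/N^2)$. Together with the $O_0$ statement from the previous step this upgrades to $\mathcal{V}_N=\mathcal{A}_N/m+O_1(1/N^2)$, whence $(\ref{der monot colding})$ follows from the algebraic manipulation $2(m-1)\mathcal{V}_N-\mathcal{A}_N=\bigl(2(m-1)/m-1\bigr)\mathcal{A}_N+O_1(1/N)=\mathcal{A}_N-(2/m)\mathcal{A}_N+O_1(1/N)=\mathcal{A}_N+O_1(1/N)$, using the uniform boundedness of $\mathcal{A}_N$ supplied by Theorem~\ref{WN convergence}. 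Differentiating $(\ref{der monot colding})$ in $s$ and observing that $d/ds=(d\lambda/ds)\,d/d\lambda$ converts an $O_1(1/N)$ error term into an $O_0(1/N^{3/2})$ one, which is $(\ref{derivative non normalized monotonicity Colding})$.

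The main obstacle is the order bookkeeping rather than any conceptual difficulty: the naive statement $\mathcal{V}_N\approx\mathcal{A}_N/m$ is easy, but extracting the sharp $N$-dependence of the error relies on the cancellation $s^{m+k}/(s^{m}\,m^{k+1})\sim 1/N^{(k+2)/2}$, which makes each further Taylor term a factor $1/N$ smaller than one might first expect. Preserving the $C^1$ (rather than merely $C^0$) decoration in the error is precisely what forces the second-order Taylor analysis in the third step, since without it differentiation of $(\ref{der monot colding})$ would not be guaranteed to produce a controlled $O_0(1/N^{3/2})$ remainder.
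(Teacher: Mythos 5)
Your proposal is correct and follows essentially the same route as the paper's own proof: the coarea reduction via Lemma \ref{Lemma vol}, the derivative bounds $d\mathcal{A}_N/ds=O_1(1/\sqrt{N})$, $d^2\mathcal{A}_N/ds^2=O_0(1/N)$ drawn from Theorem \ref{WN convergence}, a first-order Taylor expansion for the pointwise estimate $\mathcal{V}_N=\mathcal{A}_N/m+O_0(1/N^2)$, and a second-order expansion of the differentiated coarea identity to upgrade the error to $O_1(1/N^2)$ before the final algebraic step and the differentiation in $s$. No substantive differences from the paper's argument.
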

As a consequence of Theorem \ref{WN convergence} and Proposition \ref{derivative Colding}, 
 recalling the definition of the  monotonic volume (\ref{def W_N}) we immediately derive the following result.
\begin{Corollary}\label{convergence monot Colding}
The monotonic volume obeys,
    \be
    \mathcal{W}_N (\lambda) = \mathcal{W} (\lambda) + O_1(\frac{1}{N}),
    \ee
    and,
    \be
    \frac{d}{d\lambda}\mathcal{W}_N(\lambda) = \frac{d}{d\lambda}\mathcal{W}(\lambda)   + O_0(\frac{1}{N}).
    \ee
    In particular, $\mathcal{W}_N \to \mathcal{W}$ and $d\mathcal{W}_N/d\lambda \to d\mathcal{W}/d\lambda$  uniformly on compact sets in $(0,T)$.
\end{Corollary}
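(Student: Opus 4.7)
The corollary is essentially a bookkeeping consequence of the two preceding results, so my plan is to simply assemble them.

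First I would establish the $C^0$ bound. By the definition $\mathcal{W}_N = 2(m-1)\mathcal{V}_N - \mathcal{A}_N$, equation (\ref{der monot colding}) of Proposition \ref{derivative Colding} already tells us that
\[
\mathcal{W}_N(s) = \mathcal{A}_N(s) + O_1\!\left(\tfrac{1}{N}\right).
\]
Viewing both sides as functions of $\lambda\in(0,T)$ via the substitution $s=\sqrt{2N\lambda}$ (as prescribed in Remark \ref{Remark}), Theorem \ref{WN convergence}, and specifically (\ref{W=A_N+O}), gives $\mathcal{A}_N(\lambda) = \mathcal{W}(\lambda) + O_2(1/N)$. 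Adding the two estimates and recalling that the sum of an $O_1(1/N)$ and an $O_2(1/N)$ is an $O_1(1/N)$ yields
\[
\mathcal{W}_N(\lambda) = \mathcal{W}(\lambda) + O_1\!\left(\tfrac{1}{N}\right),
\]
which is the first claimed identity.

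Next I would handle the derivative. Starting from (\ref{derivative non normalized monotonicity Colding}), we have
\[
\frac{d\mathcal{W}_N}{ds} = \frac{d\mathcal{A}_N}{ds} + O_0\!\left(\tfrac{1}{N^{3/2}}\right).
\]
Converting to $\lambda$ via the chain rule with $ds/d\lambda = \sqrt{N/(2\lambda)}$ (so multiplication by $ds/d\lambda$ upgrades an $O_0(1/N^{3/2})$ into an $O_0(1/N)$ uniformly on compact subsets of $(0,T)$), we get
\[
\frac{d\mathcal{W}_N}{d\lambda} = \frac{d\mathcal{A}_N}{d\lambda} + O_0\!\left(\tfrac{1}{N}\right).
\]
Combining this with (\ref{Der WN}), namely $d\mathcal{A}_N/d\lambda = d\mathcal{W}/d\lambda + O_1(1/N)$, gives the second claimed identity,
\[
\frac{d\mathcal{W}_N}{d\lambda} = \frac{d\mathcal{W}}{d\lambda} + O_0\!\left(\tfrac{1}{N}\right).
\]

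Finally, the uniform convergence on compact subsets of $(0,T)$ is immediate from the very definition of $O_k(1/N)$, which is a uniform bound on $[\tau_1,T-\tau_1]$ for any $\tau_1>0$. There is no real obstacle here; the only mild care required is verifying the chain-rule conversion of an $O_0(1/N^{3/2})$ on the $s$-scale into an $O_0(1/N)$ on the $\lambda$-scale, which is the one place where the exponent shift between the two scales matters and where the result would fail if the estimate in Proposition \ref{derivative Colding} were only $O_0(1/N)$ in $s$ rather than $O_0(1/N^{3/2})$.
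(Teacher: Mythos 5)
Your proposal is correct and follows essentially the same route as the paper: the corollary is obtained by combining Theorem \ref{WN convergence} (equations (\ref{W=A_N+O}) and (\ref{Der WN})) with Proposition \ref{derivative Colding} (equations (\ref{der monot colding}) and (\ref{derivative non normalized monotonicity Colding})) and the definition (\ref{def W_N}), exactly as you do, with the chain-rule conversion from the $s$-scale to the $\lambda$-scale handled correctly.
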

\section{Computation of the derivative of the entropy from the derivative of the monotonic volume}\label{sect 4}

We now derive Perelman's formula for the derivative of $\mathcal{W}$ as a limit of derivatives of $\mathcal{W}_N$. To achieve this, several auxiliary results are in order.

\begin{Proposition}
Let $h = h(\tau,x)$ be any smooth real-valued function, that we consider as a function on $\hat{M}$. Then,
\begin{align}
\label{LAP} \hat{\Delta} h = & \frac{1}{v}\left(\left(1+\frac{1+2\tau R}{N}-\frac{2(\tau R+\tau^{2} \partial_{\tau}R)}{N^{2}v}\right)\partial_{\tau}h+\frac{2\tau}{N}\partial^{2}_{\tau}h\right)\\
\label{LAPP} & + \Delta h +\frac{1}{Nv}\langle \nabla R,\nabla h\rangle.
\end{align}
\end{Proposition}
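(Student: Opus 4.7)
The plan is to compute $\hat\Delta h$ directly in the coordinates $(r,\theta^\alpha,x^i)$ on $\hat M$, exploiting the block-diagonal form of $\hat g$. The inverse metric has $\hat g^{rr}=1/v$, $\hat g^{\alpha\beta}=r^{-2}g_{\mathbb{S}^N}^{\alpha\beta}$, $\hat g^{ij}=g^{ij}$, and the volume density factors as $\sqrt{|\hat g|}=\sqrt v\,r^N\sqrt{|g_{\mathbb{S}^N}|}\,\omega_g$, where $\omega_g:=\sqrt{|g(\tau,x)|}$. Since $h=h(\tau,x)$ is $\theta$-independent, the coordinate identity $\hat\Delta h=|\hat g|^{-1/2}\partial_\mu(\sqrt{|\hat g|}\,\hat g^{\mu\nu}\partial_\nu h)$ reduces to an $r$-contribution plus an $x$-contribution.

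For the $x$-part, the factors $r^N$ and $\sqrt{|g_{\mathbb{S}^N}|}$ cancel between the outer and inner factors, leaving
\[
\frac{1}{\sqrt v\,\omega_g}\,\partial_i\!\left(\sqrt v\,\omega_g\,g^{ij}\partial_j h\right)=\Delta h+\frac{\partial_i v}{2v}\,g^{ij}\partial_j h.
\]
Since $v=1+Rr^2/N^2$ with $R=R(\tau,x)$, one has $\partial_i v=(r^2/N^2)\partial_iR$, which, together with $r^2=2N\tau$, produces the $\langle\nabla R,\nabla h\rangle$ correction in (\ref{LAPP}).

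The $r$-part is where the work lies. I would convert every $\partial_r$ into $(r/N)\partial_\tau$ via $\tau=r^2/(2N)$ and use two ingredients. First, the explicit $r$-dependence of $v$ gives $\partial_r v=2rR/N^2+(r^3/N^3)\partial_\tau R$, and hence $\partial_r(v^{-1/2})$ by the chain rule. Second, the backward Ricci flow equation $\partial_\tau g_{ij}=2R_{ij}$ yields $\partial_\tau\omega_g=R\,\omega_g$, so $\partial_r\omega_g=(rR/N)\,\omega_g$; this is the sole place the Ricci flow hypothesis enters. Expanding
\[
\frac{1}{\sqrt{|\hat g|}}\,\partial_r\!\left(\frac{r^{N+1}}{N\sqrt v}\,\sqrt{|g_{\mathbb{S}^N}|}\,\omega_g\,\partial_\tau h\right)
\]
by the product rule produces four pieces, coming from $\partial_r(r^{N+1})$, $\partial_r(v^{-1/2})$, $\partial_r\omega_g$, and $\partial_r(\partial_\tau h)=(r/N)\partial_\tau^2 h$. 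After substituting $r^2=2N\tau$ throughout, these assemble precisely into the bracket coefficient of $\partial_\tau h$ and the $(2\tau/N)\partial_\tau^2 h$ piece in (\ref{LAP}).

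Conceptually the statement is a straightforward coordinate calculation; the only non-trivial input is the use of the backward Ricci flow to evaluate $\partial_\tau\omega_g$. The main obstacle is pure bookkeeping—ensuring that no product-rule term is dropped and correctly sorting the resulting contributions by their orders in $1/N$ and $1/(N^2v)$, which is what distinguishes the leading $1$ from the $(1+2\tau R)/N$ correction and from the further $-2(\tau R+\tau^2\partial_\tau R)/(N^2v)$ piece in the coefficient of $\partial_\tau h$.
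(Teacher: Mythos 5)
Your proposal is correct and takes essentially the same route as the paper: writing $\hat{\Delta} h$ via the coordinate formula $\hat{\Delta}h=|\hat{g}|^{-1/2}\partial_{\mu}(\sqrt{|\hat{g}|}\,\hat{g}^{\mu\nu}\partial_{\nu}h)$ for the block-diagonal metric, discarding the $\theta$-part, and computing the $r$- and $x$-contributions, with the backward Ricci flow entering only through $\partial_{\tau}\ln\sqrt{|g|}=R$, exactly as the paper's (terser) proof does. One minor remark: carrying out your $x$-part literally yields the correction $\frac{\tau}{Nv}\langle\nabla R,\nabla h\rangle$ rather than $\frac{1}{Nv}\langle\nabla R,\nabla h\rangle$ as printed in the statement, an innocuous factor-of-$\tau$ discrepancy (apparently a typo in the statement) that does not matter since only the $O_2(1/N)$ size of this term is used later.
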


\begin{proof}
Let $\theta^{\alpha}$ be coordinates in $\mathbb{S}^{N}$ and $x^{i}$ coordinates on $M$. Then, the inverse metric components of $\hat{g}$ are,
\be\label{gCOMP}
\hat{g}^{ri} = \hat{g}^{r\alpha}=\hat{g}^{\alpha i} =0,\quad \hat{g} = \frac{1}{v},\quad \hat{g}^{ij} = g^{ij},\quad \hat{g}^{\alpha\beta} = \frac{1}{r^{2}}g^{\alpha\beta}_{\mathbb{S}^{N}},
\ee
and the Laplacian therefore is,
\be\label{FEQ}
\hat{\Delta} h = \frac{1}{\sqrt{\hat{g}}}\partial_{r}(\sqrt{\hat{g}}\hat{g}^{rr}\partial_{r} h) + \frac{1}{\sqrt{\hat{g}}}\partial_{i}(\sqrt{\hat{g}}\hat{g}^{ij}\partial_{j}h)+\frac{1}{\sqrt{\hat{g}}}\partial_{\alpha}(\sqrt{\hat{g}}\hat{g}^{\alpha\beta}\partial_{\beta} h).
\ee
A straightforward computation shows that the first term of the r.h.s of (\ref{FEQ}) is equal to the r.h.s of (\ref{LAP}), the second term is equal to the two summands in (\ref{LAPP}) and the last term is equal to zero.
\end{proof}


The following assertion is drawn from Perelman (as mentioned in p.13 of \cite{Per02}), where we needed to include factor of $r^{2-m}$.

\begin{Proposition}[Perelman, \cite{Per02}]\label{prop lap h} Let $f = f(\tau, x)$ be any smooth real-valued function that we consider as a function on $\hat{M}$, and let $h = r^{2-m}e^{-f}$. Then,
\begin{align}\label{GREEN}
\hat{\Delta} h 
 = r^{2-m}\left(\partial_{\tau} f - \Delta f +|\nabla f|^{2} - R +\frac{n}{2\tau}\right)e^{-f}+r^{2-m}O_2(\frac{1}{N}).
\end{align}
Therefore, if $f$ satisfies, $($\ref{BACKCHO}$)$, then
\be\label{DhO}
\hat{\Delta} h = r^{2-m}O_2(\frac{1}{N}).
\ee
\end{Proposition}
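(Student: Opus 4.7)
My plan is to apply the general Laplacian formula (\ref{LAP})--(\ref{LAPP}) to $h = r^{2-m}e^{-f(\tau,x)}$ and carefully expand in powers of $1/N$ via Lemma \ref{lemma order}. Since $\tau = r^2/(2N)$, I rewrite $h = (2N\tau)^{(2-m)/2}e^{-f}$, so that a direct computation gives $\partial_\tau h = h\left[\tfrac{2-m}{2\tau} - \partial_\tau f\right]$, the analogous (longer) expression for $\partial_\tau^2 h$, and $\Delta h = h\left[|\nabla f|^2 - \Delta f\right]$. Using $2-m = -(N+n-1)$, the leading behaviors are $\partial_\tau h = -\tfrac{N}{2\tau}h + O(1)h$ and $\partial_\tau^2 h = \tfrac{N^2}{4\tau^2}h + O(N)h$, which is delicate because these large factors will interact with the $1/N$ corrections from $1/v$.

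Next I would expand each coefficient in (\ref{LAP})--(\ref{LAPP}) to the needed order: Lemma \ref{lemma order} gives $1/v = 1 - 2\tau R/N + O_2(1/N^2)$, so the factor multiplying $\partial_\tau h$ in (\ref{LAP}) simplifies to $1 + 1/N + O_2(1/N^2)$ after the two contributions $\pm 2\tau R/N$ cancel; and $\tfrac{2\tau}{Nv} = \tfrac{2\tau}{N} - \tfrac{4\tau^2 R}{N^2} + O_2(1/N^3)$. Multiplying these out against $\partial_\tau h$ and $\partial_\tau^2 h$ respectively, the first line of (\ref{LAP}) contributes $h\left[-\tfrac{N}{2\tau} - \tfrac{n}{2\tau} - \partial_\tau f\right] + O_2(1/N)h$, while the $\partial_\tau^2 h$ term contributes $h\left[\tfrac{N}{2\tau} + \tfrac{n}{\tau} + 2\partial_\tau f - R\right] + O_2(1/N)h$. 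The crucial point is that the $-R$ arises precisely from multiplying the $-4\tau^2 R/N^2$ correction in $\tfrac{2\tau}{Nv}$ against the leading $N^2/(4\tau^2)$ piece of $\partial_\tau^2 h$.

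Adding these two contributions, the divergent $\pm N/(2\tau)$ terms cancel exactly; combining with $\Delta h$ and noting that $\frac{1}{Nv}\langle \nabla R,\nabla h\rangle = O_2(1/N)h$ produces (\ref{GREEN}), where the prefactor $e^{-f}$ (smooth and bounded on compact sets) is absorbed into the $O_2(1/N)$ error together with $r^{2-m}$ being pulled out. The special case (\ref{DhO}) then follows immediately by substituting (\ref{BACKCHO}) into the bracket of (\ref{GREEN}), which makes its leading part vanish identically.

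The principal obstacle is the bookkeeping of orders: because $\partial_\tau h$ and $\partial_\tau^2 h$ have leading orders $N$ and $N^2$, corrections of size $O_2(1/N^2)$ in the coefficients can still contribute to finite-order terms in the answer. One must expand $1/v$ and $(2-m)/(2\tau)$ exactly to the right depth to capture both the constant $n/(2\tau)$ and the $-R$, while verifying that all remaining higher-order terms combine into a genuinely $C^2$-controlled remainder of size $O_2(1/N)$.
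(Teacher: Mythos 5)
Your proposal is correct and is essentially the paper's own argument: both plug $h$ into the Laplacian formula (\ref{LAP})--(\ref{LAPP}), expand the metric coefficients to the appropriate depth in $1/N$ via Lemma \ref{lemma order}, and hinge on the exact cancellation of the divergent $\pm \frac{N}{2\tau}$ contributions from $\partial_\tau h$ and $\frac{2\tau}{N}\partial^2_\tau h$. The only difference is bookkeeping: the paper writes $h=(2N)^{(2-m)/2}\tau^{-N/2}u$ with $u=\tau^{-(n-1)/2}e^{-f}$ and applies the overall $1/v$ only after the cancellation, so the $-Ru$ term comes from the coefficient $\frac{1+2\tau R}{N}$ acting on the leading part of $\partial_\tau h$, whereas you fold $1/v$ into the coefficients first, so the $-R$ instead emerges from the $-\frac{4\tau^2R}{N^2}$ correction of $\frac{2\tau}{Nv}$ against the $\frac{N^2}{4\tau^2}$ part of $\partial^2_\tau h$ --- the two accountings yield the same result.
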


\begin{proof}
Let $u = \tau^{-(n-1)/2}e^{-f}$ so that $h =(2N)^{(2-m)/2} \tau^{-N/2}u$. To simplify notation,  we will disregard the multiplicative factor $(2N)^{(2-m)/2}$ in $h$ during the computations, and add it back in afterwards. We compute,
\begin{gather}
\label{PARD}
\partial_{\tau} h = -\frac{N}{2}\tau^{-N/2-1}u+\tau^{-N/2}\partial_{\tau} u,\\
\partial^{2}_{\tau} h = \frac{N}{2}\left(\frac{N}{2}+1\right)\tau^{-N/2-2} u - N \tau^{-N/2-1} \partial_{\tau} u + \tau^{-N/2}\partial^{2}_{\tau} u.
\end{gather} 
We can write $\partial^{2}_{\tau} u = \tau^{-(n+1)/2}O_2(1)$. Therefore,
\be\label{T1}
\frac{2\tau}{N}\partial^{2}_{\tau} h = \tau^{-N/2}\left(\frac{N}{2\tau} u + \frac{1}{\tau} u - 2\partial_{\tau}u\right) + \tau^{-m/2}O_2(\frac{1}{N}).
\ee
Similarly, writing (\ref{PARD}) as,
\be
\partial_{\tau} h = \tau^{-N/2}\left(-\frac{N}{2\tau}u+\partial_{\tau} u\right),
\ee 
we obtain, 
\begin{align}\label{T2}
&\left(1 + \frac{1+2\tau R}{N} -\frac{2(\tau R+\tau^{2} \partial_{\tau}R)}{N^{2}v}\right) \partial_{\tau} h = \\ & \hspace{3cm} - \tau^{-N/2}\left( \frac{N}{2\tau} u + (\frac{1}{2\tau}+R) u - \partial_{\tau} u\right) + \tau^{-m/2}O_2(\frac{1}{N}).
\end{align}
Summing (\ref{T1}) and (\ref{T2}), and after the crucial mutual cancellation of the terms $\tau^{-N/2-1}Nu/2$, we deduce,
\begin{align}
\frac{2\tau}{N}\partial^{2}_{\tau} h & + \left(1 + \frac{1+2\tau R}{N} -\frac{2(\tau R+\tau^{2} \partial_{\tau}R)}{N^{2}v}\right) \partial_{\tau} h = \\
& \hspace{3cm} \tau^{-N/2}(-\frac{1}{2\tau}u - Ru - \partial_{\tau} u) + \tau^{-m/2}O_2(\frac{1}{N}).
\end{align}
Going back to the expression (\ref{LAP}) for $\hat{\Delta} h$, and taking into account that $1/v = 1+O_2(1/N)$ and $\langle \nabla R,\nabla h\rangle/Nv = O_2(1/N)$, we arrive at,
\be
\hat{\Delta} h = \tau^{-N/2}(-\partial_{\tau} u + \Delta u - Ru + \frac{1}{2\tau} u) + \tau^{-m/2}O_2(\frac{1}{N}).
\ee
Recalling that $u = \tau^{-(n-1)/2}e^{-f}$ and multiplying by the factor $(2N)^{(2-m)/2}$, we deduce (\ref{GREEN}).
\end{proof}


From the previous proposition we obtain the following.
\begin{Proposition} Let $h$ be defined as in $($\ref{def h}$)$ and $b$ as in $($\ref{def b}$)$. Then,
\be\label{Lap b2}
\hat{\Delta}b^{2} = 2m|\hat{\nabla}b|^{2}+O_2(\frac{1}{N}).
\ee
\end{Proposition}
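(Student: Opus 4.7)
The plan is to transfer information from $\hat{\Delta}h$, for which Proposition \ref{prop lap h} already provides the sharp estimate $\hat{\Delta}h = r^{2-m} O_2(1/N)$, over to $\hat{\Delta}b^{2}$. The bridge is the algebraic relation $h = b^{2-m}$, which lets us compute $\hat{\Delta}h$ by the chain rule purely in terms of $\hat{\Delta}b$ and $|\hat{\nabla}b|^{2}$, and then solve for the combination we actually want.

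Carrying out the chain rule on $h = b^{2-m}$ gives
\[
\hat{\Delta}h \;=\; (2-m)\, b^{1-m}\,\hat{\Delta}b \;+\; (2-m)(1-m)\, b^{-m}\,|\hat{\nabla}b|^{2}.
\]
Setting the left-hand side equal to $r^{2-m}O_{2}(1/N)$ and multiplying through by $b^{m}/(2-m)$ yields
\[
b\,\hat{\Delta}b \;=\; (m-1)|\hat{\nabla}b|^{2} \;+\; \frac{b^{m} r^{2-m}}{2-m}\,O_{2}\!\left(\tfrac{1}{N}\right).
\]
Since $\hat{\Delta}b^{2} = 2b\,\hat{\Delta}b + 2|\hat{\nabla}b|^{2}$, substituting this identity produces exactly $2m|\hat{\nabla}b|^{2}$ plus a remainder whose order must be verified.

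The one point that requires real care is checking that the remainder is genuinely $O_{2}(1/N)$ rather than something larger. Using $b = r e^{f/(m-2)}$ one has $b^{m}r^{2-m} = r^{2}e^{mf/(m-2)}$, and $r^{2} = 2N\tau$, so the prefactor reduces to $\frac{2N\tau}{2-m} e^{mf/(m-2)}$. Here $\frac{2N}{2-m} = -\frac{2N}{N+n-1}$ stays uniformly bounded, $\tau$ is bounded on compact subsets of $(0,T)$, and, by an application of Lemma \ref{lemma order} to $m/(m-2) = 1 + O(1/N)$, the exponential factor satisfies $e^{mf/(m-2)} = e^{f}(1+O_{2}(1/N))$, which is also uniformly bounded on compact sets. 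Therefore the prefactor is $O_{2}(1)$, and the full remainder is indeed $O_{2}(1/N)$.

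Assembling everything gives
\[
\hat{\Delta}b^{2} \;=\; 2(m-1)|\hat{\nabla}b|^{2} + 2|\hat{\nabla}b|^{2} + O_{2}\!\left(\tfrac{1}{N}\right) \;=\; 2m|\hat{\nabla}b|^{2} + O_{2}\!\left(\tfrac{1}{N}\right),
\]
which is \eqref{Lap b2}. The only obstacle is really the bookkeeping in the previous paragraph: one needs the two factors of $N$ coming from $r^{2}$ and $1/(2-m)$ to cancel before the $O_{2}(1/N)$ can consume the prefactor, and to have control on the exponent $m/(m-2)$ so that $e^{mf/(m-2)}$ does not degrade to something that grows with $N$.
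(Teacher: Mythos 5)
Your proposal is correct and follows essentially the same route as the paper: both rest on the chain-rule identity linking $\hat{\Delta}h$ to $\hat{\Delta}b^{2}$ and $|\hat{\nabla}b|^{2}$ (the paper writes it as $\hat{\Delta}b^{2}=2m|\hat{\nabla}b|^{2}+\tfrac{2}{2-m}h^{m/(2-m)}\hat{\Delta}h$, which is your relation read in the other direction), followed by Proposition \ref{prop lap h} and the same bookkeeping showing $\tfrac{N}{2-m}\,\tau\, e^{mf/(m-2)}=O_2(1)$ so the remainder is $O_2(\tfrac{1}{N})$.
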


\begin{proof}
Direct computation shows,
\be\nonumber
\hat{\Delta} b^{2} = 2m|\hat{\nabla} b|^{2} + \frac{2}{2-m}h^{m/(2-m)}\hat{\Delta} h.
\ee 
Using that $\hat{\Delta}h = r^{2-m}O_2(1/N)$ and that $b = h^{1/(2-m)}$, we get,
\begin{align*}
\frac{2}{2-m}h^{m/(2-m)}\hat{\Delta} h = & \frac{2}{2-m} b^{m}r^{2-m}O_2(\frac{1}{N})=\frac{2}{2-m}r^{2}e^{mf/(m-2)}O_2(\frac{1}{N}) =\\
= & \frac{4N}{2-m}\tau e^{mf/(m-2)} O_2(\frac{1}{N}) = O_2(\frac{1}{N}).
\end{align*}
\end{proof}


The following formula for the derivative of the raw area $\overline{\mathcal{A}}_{N}$  follows essentially from Theorem 2.4 in \cite{C12}, but with two key differences: (i) additional terms appear due to the fact that $\hat{\Delta} h \neq 0$, and (ii) when integrating applying Gauss's theorem, the integration is performed over the region on $\bar{s}\leq b\leq s$, for some $0<\bar{s} < s$, rather than on $b\leq s$.

\begin{Proposition}\label{reformulation theo} The following equality holds:
\be\label{Derivative oAN}
\begin{split}
\frac{d}{ds} \overline{\mathcal{A}}_{N} = & \frac{c_{N}}{2s^{m+1}}\int_{\bar{s}\leq b\leq s}\left(\left|\hat{\nabla}\hat{\nabla} b^{2}-\frac{\hat{\Delta} b^{2}}{m}\hat{g}\right|^{2}+\hat{\Ric}(\hat{\nabla}b^{2},\hat{\nabla} b^{2})\right)d\hat{V} \\ 
& +\frac{2(m-1)}{s}(\overline{\mathcal{A}}_{N}-m\overline{\mathcal{V}}_{N})+ 3[A_{N}] + [B_{N}] + [C_{N}] + [D_{N}],
\end{split}
\ee
where,
\begin{align}
& \overline{\mathcal{V}}_{N} = \frac{c_{N}}{s^{m}}\int_{\bar{s}\leq b\leq s}|\hat{\nabla} b|^{4}\, d\hat{V},\\
& [A_{N}] = \frac{c_{N}}{2-m}\int_{b=s}|\hat{\nabla} b|\hat{\Delta} h\, d\hat{A},\\
& [B_{N}] = \frac{c_{N}}{4 s^{m+1}}\int_{b=\bar{s}}\hat{\nabla}_{\hat{\rm n}} (|\hat{\nabla} b^{2}|^{2})\, d\hat{A},\\
& [C_{N}] = - \frac{c_{N}}{2s^{m+1}}\int_{b=\bar{s}}(\hat{\Delta} b^{2})\hat{\nabla}_{\hat{\rm n}} b^{2}\, d\hat{A},\\
& [D_{N}] = -(1-\frac{1}{m})\frac{c_{N}}{2s^{m+1}}\int_{\bar{s}\leq b\leq s} \left(\frac{8m}{2-m}|\hat{\nabla} b|^{2}b^{m}\hat{\Delta} h +\frac{4 b^{2m}}{(2-m)^{2}}(\hat{\Delta}h)^{2}\right)\, d\hat{V},
\end{align} 
and where $\hat{\rm n}=\hat{\nabla} b/|\hat{\nabla} b|$ is the unit-normal to $b=\bar{s}$. 
\begin{proof}
Follow the proof of Theorem 2.4 in \cite{C12}, but, when computing $(s^{2}\overline{\mathcal{A}}_{N})'/s^{2}$, use that $\hat{\Delta} b^{2} = 2m|\hat{\nabla} b|^{2} +2b^{m}(\hat{\Delta} h)/(2-m)$ instead of $\hat{\Delta} b^{2} =2m|\hat{\nabla} b|^{2}$. Furthermore, when using Gauss's theorem, integrate on $\bar{s}\leq b \leq s$ rather than on $b\leq s$.
\end{proof}

\end{Proposition}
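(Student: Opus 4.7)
The plan is to follow Colding's proof of Theorem 2.4 in \cite{C12} verbatim, making precisely the two modifications indicated in the statement: replace the identity $\hat{\Delta}b^{2}=2m|\hat{\nabla}b|^{2}$ (which holds when the ambient metric is Ricci-flat and $h$ is harmonic) by the actual identity $\hat{\Delta}b^{2} = 2m|\hat{\nabla}b|^{2} + \tfrac{2}{2-m}b^{m}\hat{\Delta}h$ supplied by (\ref{Lap b2}), and perform all applications of Gauss's theorem on the annular region $\{\bar{s}\leq b\leq s\}$ rather than on the full sublevel set $\{b\leq s\}$.

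First I would rederive Colding's computation of the derivative of $s^{m-1}\overline{\mathcal{A}}_{N}/c_{N}$ through the standard chain of steps: rewrite $\overline{\mathcal{A}}_{N}$ as the $s$-derivative of a bulk integral via the coarea formula, differentiate in $s$, integrate by parts, and apply the Bochner identity
\be\nonumber
\tfrac{1}{2}\hat{\Delta}|\hat{\nabla}b^{2}|^{2} = |\hat{\nabla}\hat{\nabla}b^{2}|^{2} + \langle\hat{\nabla}b^{2},\hat{\nabla}\hat{\Delta}b^{2}\rangle + \hat{\Ric}(\hat{\nabla}b^{2},\hat{\nabla}b^{2}).
\ee
Because $\hat{\Ric}\neq 0$ on Perelman's space, the Ricci term is retained and propagates unchanged into the bulk integrand on the right-hand side of (\ref{Derivative oAN}); this is the only modification coming from the non-flatness of the metric itself, and it accounts for the $\hat{\Ric}(\hat{\nabla}b^{2},\hat{\nabla}b^{2})$ summand. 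The standard Colding term $\tfrac{2(m-1)}{s}(\overline{\mathcal{A}}_{N}-m\overline{\mathcal{V}}_{N})$ emerges from the explicit $s$-powers exactly as in \cite{C12}, since neither of our modifications affects it.

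Next, the extra summand $\tfrac{2}{2-m}b^{m}\hat{\Delta}h$ in $\hat{\Delta}b^{2}$ propagates into two different locations. In the bulk integral one must expand $|\hat{\nabla}\hat{\nabla}b^{2} - \hat{\Delta}b^{2}\hat{g}/m|^{2}$ after replacing $\hat{\Delta}b^{2}$; the trace part of this square, combined with the cross term, produces contributions proportional to $|\hat{\nabla}b|^{2}b^{m}\hat{\Delta}h$ and $b^{2m}(\hat{\Delta}h)^{2}$, with the characteristic $(1-1/m)$ coefficient coming from the trace subtraction. These contributions are precisely $[D_{N}]$. On the outer boundary $\{b=s\}$, the same replacement produces a term of the form $\int_{b=s}|\hat{\nabla}b|\hat{\Delta}h\, d\hat{A}$ each time an $\hat{\Delta}b^{2}$ appears during the integration by parts; tracking each such occurrence yields three copies and gives $3[A_{N}]$.

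Finally, performing all integrations by parts on the annular region $\{\bar{s}\leq b\leq s\}$ leaves the interior analysis unchanged but introduces additional boundary contributions at the inner sphere $\{b=\bar{s}\}$. The two integrations by parts used in Colding's argument -- one applied to $\hat{\Delta}|\hat{\nabla}b^{2}|^{2}$ and one to $\langle\hat{\nabla}b^{2},\hat{\nabla}\hat{\Delta}b^{2}\rangle$ -- produce exactly the inner fluxes $[B_{N}]$ and $[C_{N}]$, respectively, while their counterparts at $\{b=s\}$ are absorbed into the $s^{-(m+1)}$-normalized terms. The main obstacle is purely combinatorial bookkeeping: verifying that the three distinct occurrences of $\hat{\Delta}b^{2}$ during the derivation combine to give $[A_{N}]$ with coefficient exactly $3$, and that the expansion of the traceless Hessian squared delivers the precise $(1-1/m)$ coefficient in $[D_{N}]$. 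Once these coefficients are confirmed, assembling all pieces yields (\ref{Derivative oAN}).
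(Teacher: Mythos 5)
Your proposal follows exactly the route the paper takes: reproduce Colding's proof of Theorem 2.4 in \cite{C12}, substituting the corrected identity $\hat{\Delta}b^{2}=2m|\hat{\nabla}b|^{2}+\tfrac{2}{2-m}b^{m}\hat{\Delta}h$ from (\ref{Lap b2}) wherever $\hat{\Delta}b^{2}=2m|\hat{\nabla}b|^{2}$ was used, retaining the $\hat{\Ric}(\hat{\nabla}b^{2},\hat{\nabla}b^{2})$ term from the Bochner identity, and applying Gauss's theorem on the annulus $\{\bar{s}\leq b\leq s\}$ so that the inner-boundary fluxes give $[B_{N}]$ and $[C_{N}]$ while the non-harmonicity of $h$ generates $[A_{N}]$ and $[D_{N}]$. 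This is essentially the same argument as the paper's (which is itself only a pointer to \cite{C12} with the same two modifications), and the bookkeeping of the coefficients you flag is likewise left implicit there.
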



\begin{Proposition}\label{Der normalizing term}
We have,
\be\label{Derivative of int nabla b}
\frac{d}{ds}\left(\frac{c_N}{s^{m-1}}\int_{b=s}| \hat{\nabla} b| d\hat{A}\right) = [A_N] + O_2(\frac{1}{N^{3/2}}).
\ee
\end{Proposition}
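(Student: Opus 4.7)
The plan is to compute $Q'(s) := \frac{d}{ds}\bigl(c_N s^{1-m}\int_{b=s}|\hat{\nabla} b|\, d\hat{A}\bigr)$ explicitly by combining the coarea formula, Gauss's theorem, and the exact identity $2m|\hat{\nabla} b|^2 = \hat{\Delta} b^2 - \frac{2}{2-m}b^m\hat{\Delta} h$ already obtained in the proof of Proposition \ref{Lap b2}. Once $Q'(s)$ is in closed form, I subtract $[A_N]$ and bound the residual using the orders from (\ref{expression for grad b2}), (\ref{GRADORD}), (\ref{DhO}), and Proposition \ref{Vol Element}.

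Concretely, set $J(s) := \int_{b=s}|\hat{\nabla} b|\, d\hat{A}$ and fix any $\bar s \in (0,s)$. The coarea formula applied to $|\hat{\nabla} b|^2$ yields $\int_{\bar s\leq b\leq s}|\hat{\nabla} b|^2\, d\hat{V} = \int_{\bar s}^s J(w)\,dw$, so $J(s) = \frac{d}{ds}\int_{\bar s\leq b\leq s}|\hat{\nabla} b|^2\, d\hat{V}$. Substituting the identity above, applying Gauss's theorem to the $\hat{\Delta} b^2$ term (where $\hat{\nabla}_{\hat{\rm n}}b^2 = 2b|\hat{\nabla} b|$ and the $\bar s$-boundary contribution is independent of $s$), and coarea again to the $b^m\hat{\Delta} h$ term, I arrive at the linear relation
\[ mJ(s) = J(s) + sJ'(s) - \frac{s^m}{2-m}\int_{b=s}\frac{\hat{\Delta} h}{|\hat{\nabla} b|}\, d\hat{A}. \]
Solving for $J'(s)$ and substituting into $Q'(s) = c_N s^{1-m}J'(s) - (m-1)c_N s^{-m}J(s)$, the two $J(s)$ contributions cancel cleanly and produce
\[ Q'(s) = \frac{c_N}{2-m}\int_{b=s}\frac{\hat{\Delta} h}{|\hat{\nabla} b|}\, d\hat{A}, \]
from which
\[ Q'(s) - [A_N] = \frac{c_N}{2-m}\int_{b=s}\hat{\Delta} h\cdot \frac{1 - |\hat{\nabla} b|^2}{|\hat{\nabla} b|}\, d\hat{A}. \]

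The main obstacle, and really the only nontrivial step, is bookkeeping the powers of $N$ in this remainder. By (\ref{expression for grad b2}) and (\ref{GRADORD}), $(1 - |\hat{\nabla} b|^2)/|\hat{\nabla} b| = O_2(1/N)$, and by (\ref{DhO}), $\hat{\Delta} h = r^{2-m}O_2(1/N)$, which on $b=s$ reads $s^{2-m}e^{-f}O_2(1/N)$. Using Proposition \ref{Vol Element} and the fact that the integrand is $\theta$-independent, the angular integration supplies a factor of $|\mathbb{S}^N|$; since $N + 2 - m = 1 - n$ and $c_N|\mathbb{S}^N| = (4\pi)^{-n/2}(2N)^{n/2+1}/4$, the prefactor collapses to
\[ \frac{c_N|\mathbb{S}^N|s^{1-n}}{2-m} = -\frac{(4\pi)^{-n/2}(2N)^{3/2}\lambda^{(1-n)/2}}{4(N+n-1)}, \]
which is of order $N^{1/2}$. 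Multiplying by the $O_2(1/N^2)$ coming from the integrand yields the desired $O_2(1/N^{3/2})$ bound and completes the argument.
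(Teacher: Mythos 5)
Your proposal is correct and arrives at exactly the same closed form as the paper: the derivative of $\frac{c_N}{s^{m-1}}\int_{b=s}|\hat{\nabla}b|\,d\hat{A}$ equals $\frac{c_N}{2-m}\int_{b=s}\hat{\Delta}h\,|\hat{\nabla}b|^{-1}\,d\hat{A}$, after which your comparison with $[A_N]$ and the order bookkeeping ($\hat{\Delta}h=r^{2-m}O_2(\frac{1}{N})$, $(1-|\hat{\nabla}b|^2)/|\hat{\nabla}b|=O_2(\frac{1}{N})$, prefactor $\frac{c_N|\mathbb{S}^N|s^{1-n}}{2-m}$ of order $N^{1/2}$) is essentially identical to the paper's. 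The only genuine difference is how you reach that closed form: the paper observes directly that $|\hat{\nabla}b|/b^{m-1}=\frac{1}{2-m}\hat{\nabla}_{\hat{\rm n}}h$, so the normalized area is the flux of $h$ through the level set, whose $s$-derivative is $\frac{1}{2-m}\int_{b=s}\hat{\Delta}h/|\hat{\nabla}b|\,d\hat{A}$; you instead route through the exact identity $\hat{\Delta}b^2=2m|\hat{\nabla}b|^2+\frac{2}{2-m}b^m\hat{\Delta}h$, Gauss's theorem on $\{\bar{s}\le b\le s\}$ and the coarea formula, obtaining a differential relation for $J(s)$ that you solve. This is longer but valid, and it uses only ingredients the paper itself relies on (regularity of the level sets and compactness of the region for large $N$, guaranteed by (\ref{implicit map})--(\ref{order implicit map}); the $\bar{s}$-boundary term indeed drops out upon differentiation). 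One trivial slip: on $b=s$ one has $r^{2-m}=s^{2-m}e^{f}$, not $s^{2-m}e^{-f}$, by (\ref{expansion r}); this is immaterial since $e^{\pm f}=O_2(1)$ on the compact region and the factor is absorbed into the $O_2$ term.
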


\begin{proof}
Since $\hat{\nabla}_{\hat{n}} b = |\hat{\nabla} b|,$  we compute,
\begin{align}
\frac{d}{ds}\left(\frac{1}{s^{m-1}}\int_{b=s}|\hat{\nabla} b|\, d\hat{A}\right) = & \frac{d}{ds}\left(\int_{b=s}\frac{\hat{\nabla}_{\hat{\rm n}} b}{b^{m-1}}\, d\hat{A}\right)
=  \frac{1}{2-m}\frac{d}{ds}\left(\int_{b=s}\hat{\nabla}_{\hat{\rm n}} h\, d\hat{A}\right) \\ 
\label{DERA} = & \frac{1}{2-m}\int_{b=s} \frac{\hat{\Delta} h}{|\hat{\nabla} b|}\, d\hat{A}.
\end{align}
 Appeal now to (\ref{GRADORD}) to obtain,
\be\label{TR}
\frac{1}{|\hat{\nabla} b|} = |\hat{\nabla} b|\frac{1}{|\hat{\nabla} b|^{2}}=|\hat{\nabla} b| \frac{1}{1 + O_2(\frac{1}{N})} = |\hat{\nabla}b|(1 +O_2(\frac{1}{N})) = |\hat{\nabla}b| +O_2(\frac{1}{N}).
\ee
Then, using (\ref{expansion r}) in (\ref{DhO}), we get $\hat{\Delta}h = s^{2-m}O_2(1/N)$ on the level set $b=s$. Now, combining this, (\ref{TR}) and (\ref{Vol Elm in s}) all in (\ref{DERA}), we deduce,
\begin{align*}
& \frac{d}{ds} \left(\frac{c_N}{s^{m-1}} \int_{b=s}| \hat{\nabla} b| d\hat{A}\right) = \\ 
& \hspace{1cm} = [A_N]  + \frac{c_{N}}{2-m}\int_{M} s^{2-m} O_2(\frac{1}{N}) O_2(\frac{1}{N}) s^{N} (1+O_2(\frac{1}{N}))e^{-f}\, d\nu d\nu_{\mathbb{S}^{N}}.
\end{align*}
But, as $c_{N}=(4\pi)^{-n/2}(2N)^{n/2+1}/(4|\mathbb{S}^{N}|)$ and $s=(2N\lambda)^{1/2}$, we obtain,
\begin{align*}
\frac{c_{N}}{2-m}\int_{M} s^{2-m} O_2(\frac{1}{N}) & O_2(\frac{1}{N}) s^{N} (1+ O_2(\frac{1}{N}))e^{-f} \, d\nu d\nu_{\mathbb{S}^{N}} = \\
& = \frac{(2N)^{n/2+1}}{2-m}\frac{1}{N^{2}}(2N\lambda)^{1/2-n/2}O_2(1) = O_2(\frac{1}{N^{3/2}}),
\end{align*}
as wished.
\end{proof}

We now proceed to simplify expression (\ref{Derivative oAN}). From this point onward, set $\bar{s}=\sqrt{2N\lambda_0}$, where $\lambda_0<\lambda$.
\begin{Proposition}\label{controlled extra terms}
The following equality holds,
\be\label{Controlled terms}
3[A_{N}] + [B_{N}] + [C_{N}] + [D_{N}] = - [A_N] +O_0(\frac{1}{N^{3/2}}).
\ee
\begin{proof}
    We first show that $[B_N]$ and $[C_N]$ decay exponentially fast. In order to control $[B_N]$, observe that $\hat{\nabla}_{\hat{\rm n}} (|\hat{\nabla} b^{2}|^{2})=\hat{g}(\hat{\nabla}|\hat{\nabla} b^{2}|^{2}, \hat{n})$. Now, we write $|\hat{\nabla} b^{2}|^{2}=4b^2|\hat{\nabla}b|^2$ and use (\ref{expression for grad b2}) to obtain,
        \be\nonumber
        \hat{\nabla}|\hat{\nabla} b^{2}|^{2}= 8b|\hat{\nabla}b|^2\hat{\nabla}b + 4b^2\hat{\nabla} \left(O_2(\frac{1}{N})\right).
        \ee
        Using that $\partial_r = r/N \partial_\tau$ and $b=O_2(N^{1/2})$, we get,
        \be\nonumber
        \hat{\nabla}|\hat{\nabla} b^{2}|^{2} = O_1(N^{1/2})\partial_r + \sum_{i=1}^nO_1(N^{1/2})\partial_{x^i}.
        \ee
        Since by (\ref{GRADORD}) and (\ref{grad b}) we have $\hat{n}= O_2(1)\partial_r + O_2(N^{-1})\nabla f$, we see,
        \be\nonumber
        \lvert \hat{\nabla}_{\hat{\rm n}} (|\hat{\nabla} b^{2}|^{2}) \rvert = O_1(N^{1/2}).
        \ee
        We integrate using (\ref{Vol Elm in s}) to find,
        \be\nonumber
        [B_N] = \frac{c_N}{4s^{m+1}}\int_{b=\bar{s}} O_1(N^{1/2}) d\hat{A} = O_1(N)\left( \frac{\bar{s}}{s}\right)^N = O_1(N) \left( \frac{\lambda_0}{\lambda}\right)^{N/2},
        \ee
        and since $\lambda_0 < \lambda$, it decays exponentially fast.
       
       In order to control $[C_N]$, we observe that since $(\hat{\Delta} b^{2})\hat{\nabla}_{\hat{\rm n}} b^{2}=(\hat{\Delta} b^{2}) 2b|\hat{\nabla}b|$, we can combine this with (\ref{Lap b2}) and (\ref{GRADORD}) to show that, 
       \be\nonumber
       (\hat{\Delta} b^{2})\hat{\nabla}_{\hat{\rm n}} b^{2} = O_2(N^{3/2}),
       \ee
       and now a similar computation as the one performed for $[B_N]$ shows that $[C_N]$ also decays exponentially fast.

        For $[D_N]$ we proceed as follows. By the coarea formula,
        \be\label{expression Dn}
        \begin{split}
        \frac{c_{N}}{2s^{m+1}}\int_{\bar{s}\leq b\leq s} \frac{8m}{2-m}|&\hat{\nabla} b|^{2}b^{m} \hat{\Delta} h\,  d\hat{V}  \\
        &= \frac{1}{s^{m+1}}\frac{4m}{2-m}\int_{\bar{s}}^s w^m \left(c_N\int_{b=w} |\hat{\nabla} b| \hat{\Delta} h \, d\hat{A}\right)dw.
        \end{split}
        \ee
        Then, perform a Taylor expansion on the term multiplying $w^m$ around $s$  to obtain,
        \be\label{taylor Dn}
        \begin{split}
        c_N&\int_{b=w} |\hat{\nabla} b| \hat{\Delta} h \, d\hat{A} \\ 
        &= c_N\int_{b=s} |\hat{\nabla} b| \hat{\Delta} h \, d\hat{A} +(2-m)(w-s)\frac{d}{ds}\left(\frac{c_N}{(2-m)}\int_{b=\xi_{w,N}} |\hat{\nabla} b| \hat{\Delta} h \, d\hat{A}\right),
        \end{split}
        \ee
        for some $\xi_{w,N} \in [w,s]$ (in particular, $\xi_{w,N} \in [\bar{s},s]$), and observe that the term inside parentheses is $[A_N]$ evaluated at $s=\xi_{w,N}.$ Using (\ref{GRADORD}) and (\ref{DhO}), we see that $|\hat{\nabla} b| \hat{\Delta} h = r^{2-m}O_2(1/N)$.
        Then, by (\ref{expansion r}) and (\ref{Vol Elm in s}),
        \be\label{order of A_N}
        \begin{split}
        [A_N]=\frac{c_N}{2-m}\int_{b=s} |\hat{\nabla} b| \hat{\Delta} h d\hat{A} & = \frac{(2N)^{n/2}}{2(4\pi)^{n/2}} \int_M s^{2-m+N}e^{-f}O_2(\frac{1}{N})(1+O_2(\frac{1}{N}))d\nu \\
        & = O_2(\frac{1}{N^{1/2}}).
        \end{split}
        \ee
         We use this to compute,
        \be\label{derivative of AN}
        \frac{d}{ds}[A_N]=\frac{d}{ds}\left(\frac{c_N}{2-m}\int_{b=s} |\hat{\nabla} b| \hat{\Delta} h d\hat{A}\right) = \frac{d\lambda}{ds} \frac{d}{d\lambda} \left( O_2(\frac{1}{N^{1/2}})\right) = O_1(\frac{1}{N}),
        \ee
        and  combine it with (\ref{taylor Dn}) to find,
        \be\nonumber
        c_N\int_{b=w} |\hat{\nabla} b| \hat{\Delta} h \, d\hat{A} = c_N\int_{b=s} |\hat{\nabla} b| \hat{\Delta} h \, d\hat{A} +(w-s)O_1(1).
        \ee
        Using this expression together with (\ref{expression Dn}), we can proceed as in (\ref{Argument for o0 taylor}) and (\ref{Taylor for V_N}) to show,
        \be\label{term AN in DN}
         \frac{c_{N}}{2s^{m+1}}\int_{\bar{s}\leq b\leq s} \frac{8m}{2-m}|\hat{\nabla} b|^{2}b^{m}\hat{\Delta} h\,  d\hat{V} = 4[A_N] + O_0(\frac{1}{N^{3/2}}).
        \ee
        In order to control the remaining term, we write
        \be\nonumber
        \begin{split}
        \frac{c_{N}}{2s^{m+1}}\int_{\bar{s}\leq b\leq s}&\frac{4 b^{2m}}{(2-m)^{2}}(\hat{\Delta}h)^{2}\, d\hat{V} \\ &= \frac{2}{s^{m+1}}\int_{\bar{s}}^s w^m \left( \frac{c_N}{(2-m)^2} \int_{b=w} w^m \frac{(\hat{\Delta}h)^{2}}{|\hat{\nabla} b|} \, d\hat{A}\right)dw,
        \end{split}
        \ee
         and using (\ref{DhO}), (\ref{GRADORD}), (\ref{expansion r}) and (\ref{Vol Elm in s}), we notice that the term in parentheses is of order $O_2(N^{-3/2}).$ Therefore, we proceed as in (\ref{Argument for o0 taylor}) and (\ref{Taylor for V_N}) to show,
        \be\label{final term Dn}
         \begin{split}
        \frac{c_{N}}{2s^{m+1}}\int_{\bar{s}\leq b\leq s}\frac{4 b^{2m}}{(2-m)^{2}}(\hat{\Delta}h)^{2}\, d\hat{V} &= \frac{1}{m+1}O_0(\frac{1}{N^{3/2}}) \\
        & = O_0(\frac{1}{N^{5/2}}).
         \end{split}
         \ee
         Combining (\ref{term AN in DN}) and (\ref{final term Dn}), it follows that
         \be\label{Control DN}
        [D_N] = -4[A_N] + O_0(\frac{1}{N^{3/2}}).
         \ee
         Given that $[B_N]$ and $[C_N]$ decay exponentially fast, substituting (\ref{Control DN}) into the l.h.s. of (\ref{Controlled terms}) completes the proof.
    \end{proof}
\end{Proposition}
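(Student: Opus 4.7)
My plan is to treat each of $[B_N]$, $[C_N]$, and $[D_N]$ in turn, showing that $[B_N]$ and $[C_N]$ decay exponentially in $N$ while $[D_N]$ contributes the balancing $-4[A_N] + O_0(1/N^{3/2})$; then $3[A_N]+[B_N]+[C_N]+[D_N] = -[A_N]+O_0(1/N^{3/2})$ follows immediately.

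For $[B_N]$ and $[C_N]$, both are boundary integrals on the inner level $b=\bar{s}$, weighted by $c_N/s^{m+1}$. The volume element expansion (\ref{Vol Elm in s}) at level $\bar{s}$ contributes a factor of $\bar{s}^N$, producing a global prefactor $(\bar{s}/s)^N = (\lambda_0/\lambda)^{N/2}$ that decays exponentially in $N$. To complete the estimates I must dominate the polynomial prefactors of the integrands: for $[B_N]$, writing $|\hat{\nabla}b^2|^2 = 4b^2|\hat{\nabla}b|^2$ and differentiating using (\ref{expression for grad b2}) with the chain rule $\partial_r = (r/N)\partial_\tau$ gives $|\hat{\nabla}(|\hat{\nabla}b^2|^2)| = O_1(N^{1/2})$; for $[C_N]$, combining (\ref{Lap b2}) and (\ref{GRADORD}) yields $(\hat{\Delta}b^2)\hat{\nabla}_{\hat{\rm n}}b^2 = O_2(N^{3/2})$. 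In both cases the exponential prefactor beats the polynomial, so the terms are $O_1(e^{-cN})$.

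For $[D_N]$ the strategy is coarea-plus-Taylor. Using $d\hat{V} = d\hat{A}\,dw/|\hat{\nabla}b|$ together with the identity $c_N\int_{b=w}|\hat{\nabla}b|\hat{\Delta}h\,d\hat{A} = (2-m)[A_N](w)$, the first summand of $[D_N]$ rewrites as $-\frac{4(m-1)}{s^{m+1}}\int_{\bar{s}}^s w^m [A_N](w)\,dw$. The Taylor expansion $[A_N](w) = [A_N](s) + (w-s)[A_N]'(\xi_w)$, combined with the bounds $[A_N] = O_2(1/N^{1/2})$ (a direct consequence of (\ref{GRADORD}), (\ref{DhO}), (\ref{expansion r}), and (\ref{Vol Elm in s})) and $[A_N]' = O_1(1/N)$ (by the chain rule $d/ds = (d\lambda/ds)\,(d/d\lambda)$), gives a leading term $-\frac{4(m-1)}{m+1}[A_N](1-(\bar{s}/s)^{m+1}) = -4[A_N] + O(1/N^{3/2})$ (since $(m-1)/(m+1) = 1+O(1/N)$) and a Taylor-remainder of size $O(s/N^2) = O(1/N^{3/2})$ once the moment $\int_{\bar{s}}^s w^m(s-w)\,dw \sim s^{m+2}/[(m+1)(m+2)]$ is computed. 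The second summand of $[D_N]$, involving $(\hat{\Delta}h)^2 = r^{2(2-m)}O_2(1/N^2)$, is handled by the same coarea reduction; its inner integral is of order $O(1/N^{3/2})$ and the outer integration improves this to $O(1/N^{5/2})$. Therefore $[D_N] = -4[A_N] + O_0(1/N^{3/2})$, and substituting into $3[A_N]+[B_N]+[C_N]+[D_N]$ closes the argument.

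The main technical obstacle is the Taylor-remainder estimate for the first summand of $[D_N]$. A naive bound $|w-s|\leq s-\bar{s} = O(\sqrt{N})$ paired with $\int_{\bar{s}}^s w^m\,dw \sim s^{m+1}/(m+1)$ and $[A_N]' = O(1/N)$ yields only $O(1/\sqrt{N})$, which is insufficient. The critical refinement is that $w^m$ concentrates sharply near $w=s$: the weighted moment $\int_{\bar{s}}^s w^m(s-w)\,dw$ picks up an additional factor $1/(m+2) = O(1/N)$, promoting the error to the required $O(1/N^{3/2})$. The same concentration phenomenon allows the discrepancy $4(m-1)/(m+1) - 4 = O(1/N)$ to be harmlessly absorbed into the prefactor $[A_N] = O(1/N^{1/2})$.
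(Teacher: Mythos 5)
Your proposal is correct and follows essentially the same route as the paper: exponential decay of $[B_N]$ and $[C_N]$ from the $(\bar{s}/s)^N$ factor with the same polynomial prefactor estimates, and a coarea-plus-Taylor treatment of $[D_N]$ using $[A_N]=O_2(1/N^{1/2})$, $d[A_N]/ds=O_1(1/N)$, and the moment $\int_{\bar{s}}^{s}w^{m}(s-w)\,dw$ to reach $O_0(1/N^{3/2})$. The only cosmetic difference is that you fold the prefactor $-(1-1/m)$ into the coarea identity from the start (tracking $4(m-1)/(m+1)=4+O(1/N)$ explicitly), whereas the paper first proves the un-prefactored estimate (\ref{term AN in DN}) and multiplies afterwards; the content is identical.
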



\begin{Proposition}\label{difference  of normalized av}
We have,
\be\label{difference AN and VN}
 \frac{2(m-1)}{s}(\overline{\mathcal{A}}_{N}-m\overline{\mathcal{V}}_{N})  = 2\frac{d}{ds}\overline{\mathcal{A}}_{N}(s) + O_0(\frac{1}{N^{3/2}}).
    \ee
\begin{proof}
    We use the coarea formula to  write,
    \be\nonumber
   \overline{\mathcal{V}}_{N} = \frac{c_{N}}{s^{m}}\int_{\bar{s}\leq b\leq s}|\hat{\nabla} b|^{4}\, d\hat{V}  = \frac{1}{s^{m}}\int_{\bar{s}}^{s} w^{m-1}\overline{\mathcal{A}}_{N}(w) dw,
    \ee
    and perform a Taylor expansion of $\overline{\mathcal{A}}_{N}$ around $s$, to obtain, 
    \be\label{taylor OAn}
    \overline{\mathcal{A}}_{N}(w) = \overline{\mathcal{A}}_{N}(s) + (w-s)\frac{d}{ds}\overline{\mathcal{A}}_{N}(s) +\frac{(w-s)^2}{2}\frac{d^2\overline{\mathcal{A}}_{N}}{ds^2}(\xi_{w,N}), 
    \ee
    for some $\xi_{w,N} \in [w,s].$ In order to control the second derivative, we notice that
    \be\label{Oan sum}
\overline{\mathcal{A}}_N = \mathcal{A}_N + \frac{c_N}{s^{m-1}}\int_{b=s}| \hat{\nabla} b| d\hat{A}.
    \ee
    By (\ref{dA_N/ds}) and (\ref{second derivative A_N}), we have $d\mathcal{A}_N/ds = O_1(N^{-1/2})$ and $d^2\mathcal{A}_N/ds^2 = O_0(N^{-1})$, respectively.
    Using Proposition \ref{Der normalizing term}, we compute the second derivative of the second term on the r.h.s of (\ref{Oan sum}) as, 
    \be\label{ds2Oan}
    \begin{split}
    \frac{d^2}{ds^2}\left( \frac{c_N}{s^{m-1}}\int_{b=s}| \hat{\nabla} b| d\hat{A}\right) & = \frac{d}{ds}[A_N]+ \frac{d\lambda}{ds}\frac{d}{d\lambda}\left( O_2(\frac{1}{N^{3/2}})\right)\\
    & = O_1(\frac{1}{N}),
    \end{split}
    \ee
    since $d[A_N]/ds$ was already computed in (\ref{derivative of AN}).
    This shows,
    \be\nonumber
    \frac{d^2}{ds^2}\overline{\mathcal{A}}_N = O_0(\frac{1}{N}),
    \ee
    and therefore,
    \be\nonumber
    \overline{\mathcal{A}}_{N}(w) = \overline{\mathcal{A}}_{N}(s) + (w-s)\frac{d}{ds}\overline{\mathcal{A}}_{N}(s) +\frac{(w-s)^2}{2}O_0(\frac{1}{N}).
    \ee
    Now integrate using the bound $|O_0(1/N)| \leq C/N$ for some $C>0$ and proceed as in (\ref{Argument for o0 taylor}) and (\ref{Taylor for V_N}) to show,
    \be\nonumber
    \frac{1}{s^{m}}\int_{\bar{s}}^{s} w^{m-1} \overline{\mathcal{A}}_{N}(w) dw = \frac{1}{m} \overline{\mathcal{A}}_{N}(s) -\frac{s}{m(m+1)}\frac{d}{ds}\overline{\mathcal{A}}_{N}(s) + O_0(\frac{1}{N^{3}}),
    \ee
    where we have absorbed the term decaying exponentially fast into $O_0(1/N^3).$ We finish the proof by expanding the l.h.s. of (\ref{difference AN and VN}) to show,
    \begin{align*}
 \frac{2(m-1)}{s}(\overline{\mathcal{A}}_{N}-m\overline{\mathcal{V}}_{N})  &=  \frac{2(m-1)}{s}\left(\frac{s}{(m+1)}\frac{d}{ds}\overline{\mathcal{A}}_{N}(s) + O_0(\frac{1}{N^2})\right)
\\ &= 2\frac{d}{ds}\overline{\mathcal{A}}_{N}(s) + O_0(\frac{1}{N^{3/2}}).
    \end{align*}
\end{proof}
\end{Proposition}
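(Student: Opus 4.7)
The plan is to relate $\overline{\mathcal{V}}_N$ to $\overline{\mathcal{A}}_N$ via the coarea formula together with a Taylor expansion, following the same pattern as Proposition \ref{derivative Colding} but carried out one order higher so that the derivative $d\overline{\mathcal{A}}_N/ds$ appears explicitly. First I would use coarea together with the exponential cut-off near $b=\bar{s}$ from Lemma \ref{Lemma vol} to write
\[
\overline{\mathcal{V}}_N(s) = \frac{1}{s^m}\int_{\bar{s}}^s w^{m-1}\overline{\mathcal{A}}_N(w)\, dw + O_1(e^{-cN}),
\]
then Taylor-expand $\overline{\mathcal{A}}_N(w)$ to second order about $w = s$. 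The constant-in-$w$ term integrates to $\tfrac{1}{m}\overline{\mathcal{A}}_N(s)$, the linear term to $-\tfrac{s}{m(m+1)}\,d\overline{\mathcal{A}}_N/ds(s)$, and a direct computation along the lines of (\ref{Argument for o0 taylor}) and (\ref{Taylor for V_N}) shows the quadratic remainder is at most of order $s^2/m^3$ times a sup-norm bound on $d^2\overline{\mathcal{A}}_N/ds^2$. So the whole argument reduces to controlling that second derivative uniformly in $N$.

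The core technical step is therefore to prove $d^2\overline{\mathcal{A}}_N/ds^2 = O_0(1/N)$. For this I would split
\[
\overline{\mathcal{A}}_N(s) = \mathcal{A}_N(s) + \frac{c_N}{s^{m-1}}\int_{b=s}|\hat{\nabla}b|\, d\hat{A},
\]
using $|\hat{\nabla}b|^3 = (|\hat{\nabla}b|^2-1)|\hat{\nabla}b| + |\hat{\nabla}b|$. For the first summand $d^2\mathcal{A}_N/ds^2 = O_0(1/N)$ is exactly (\ref{second derivative A_N}). For the second, Proposition \ref{Der normalizing term} identifies its $s$-derivative with $[A_N] + O_2(1/N^{3/2})$, so a further differentiation in $s$ reduces matters to $d[A_N]/ds$; combining (\ref{order of A_N}), (\ref{derivative of AN}) and $d\lambda/ds = O_0(1/N^{1/2})$ then gives the required $O_0(1/N)$ bound.

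Putting the pieces together yields $\overline{\mathcal{V}}_N = \tfrac{1}{m}\overline{\mathcal{A}}_N - \tfrac{s}{m(m+1)}\,d\overline{\mathcal{A}}_N/ds + O_0(1/N^3)$, hence $\overline{\mathcal{A}}_N - m\overline{\mathcal{V}}_N = \tfrac{s}{m+1}\,d\overline{\mathcal{A}}_N/ds + O_0(1/N^2)$, and multiplying by $2(m-1)/s$ (noting $(m-1)/s = O(N^{1/2})$) produces
\[
\frac{2(m-1)}{s}(\overline{\mathcal{A}}_N - m\overline{\mathcal{V}}_N) = \frac{2(m-1)}{m+1}\,\frac{d\overline{\mathcal{A}}_N}{ds} + O_0(1/N^{3/2}).
\]
Finally, the expansion $2(m-1)/(m+1) = 2 + O(1/N)$ paired with $d\overline{\mathcal{A}}_N/ds = O_1(1/N^{1/2})$ (obtained by adding (\ref{dA_N/ds}) to the bound on $d[A_N]/ds$) upgrades the prefactor to $2$ at the cost of a further $O_0(1/N^{3/2})$ error, giving the stated identity.

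The hard part is the $C^2$-in-$s$ estimate on $\overline{\mathcal{A}}_N$. The normalizing integral $c_N s^{1-m}\int_{b=s}|\hat{\nabla}b|\,d\hat{A}$ is not individually small; its two $s$-derivatives become small only through delicate cancellations that pair Proposition \ref{Der normalizing term} with the near-harmonicity $\hat{\Delta}h = r^{2-m}O_2(1/N)$ from Proposition \ref{prop lap h}, and that translate $\lambda$-derivatives into $s$-derivatives via the factor $d\lambda/ds = O_0(1/N^{1/2})$. Once those cancellations are visible the rest is routine order-of-magnitude bookkeeping.
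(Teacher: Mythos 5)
Your proposal is correct and follows essentially the same route as the paper: coarea plus a second-order Taylor expansion of $\overline{\mathcal{A}}_N$ about $s$, with the key $d^2\overline{\mathcal{A}}_N/ds^2 = O_0(1/N)$ bound obtained from the same splitting $\overline{\mathcal{A}}_N = \mathcal{A}_N + c_N s^{1-m}\int_{b=s}|\hat{\nabla}b|\,d\hat{A}$, Proposition \ref{Der normalizing term}, and the bound on $d[A_N]/ds$, followed by the same $\frac{2(m-1)}{m+1}=2+O(1/N)$ bookkeeping. The only cosmetic difference is your exponential cut-off term, which is unnecessary since $\overline{\mathcal{V}}_N$ is already defined as an integral over $\bar{s}\leq b\leq s$.
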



Combining the previous results, we obtain the following theorem.

\begin{Theorem}\label{derivative renormalized volume}
We have,
\be\label{derivative of A_N2}
\frac{d}{ds} \mathcal{A}_N = - \frac{c_{N}}{2s^{m+1}}\int_{\bar{s}\leq b\leq s}\left(\left|\hat{\nabla}\hat{\nabla} b^{2}-\frac{\hat{\Delta} b^{2}}{m}\hat{g}\right|^{2}+\hat{\Ric}(\hat{\nabla}b^{2},\hat{\nabla} b^{2})\right)d\hat{V} + O_0(\frac{1}{N^{3/2}}).
\ee
In particular,
    \be\label{derivative monot colding}
    \begin{split}
    \frac{d}{ds}  \big(&2(m-1) \mathcal{V}_N - \mathcal{A}_N\big)(s) \\
    &= - \frac{c_{N}}{2s^{m+1}}\int_{\bar{s}\leq b\leq s}\left(\left|\hat{\nabla}\hat{\nabla} b^{2}-\frac{\hat{\Delta} b^{2}}{m}\hat{g}\right|^{2}+\hat{\Ric}(\hat{\nabla}b^{2},\hat{\nabla} b^{2})\right)d\hat{V} + O_0(\frac{1}{N^{3/2}}). 
    \end{split}
    \ee
    \begin{proof}
        To prove (\ref{derivative of A_N2}), subtract (\ref{Derivative of int nabla b}) from (\ref{Derivative oAN}), and simplify the expression using (\ref{Controlled terms}) and (\ref{difference AN and VN}). To prove (\ref{derivative monot colding}), apply (\ref{derivative non normalized monotonicity Colding}).
    \end{proof}
\end{Theorem}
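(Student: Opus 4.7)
The plan is to assemble Theorem \ref{derivative renormalized volume} from the four preceding propositions by a sequence of algebraic substitutions; essentially no new estimates are required. I would first treat the raw area $\overline{\mathcal{A}}_N$, then pass to $\mathcal{A}_N$ by subtracting the normalizing term, and finally invoke Proposition \ref{derivative Colding} to get to the monotonic volume.

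Concretely, I would start from the identity (\ref{Derivative oAN}) of Proposition \ref{reformulation theo}, which expresses $d\overline{\mathcal{A}}_N/ds$ as the sum of the desired integral $I := \frac{c_N}{2s^{m+1}}\int_{\bar{s}\leq b\leq s}\bigl(|\hat{\nabla}\hat{\nabla} b^{2}-\tfrac{\hat{\Delta} b^{2}}{m}\hat{g}|^{2}+\hat{\Ric}(\hat{\nabla}b^{2},\hat{\nabla} b^{2})\bigr)d\hat{V}$, plus the boundary-like term $\tfrac{2(m-1)}{s}(\overline{\mathcal{A}}_N - m\overline{\mathcal{V}}_N)$, plus the combination $3[A_N]+[B_N]+[C_N]+[D_N]$. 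I would then apply Proposition \ref{controlled extra terms} to collapse the latter combination into $-[A_N]+O_0(N^{-3/2})$, and apply Proposition \ref{difference  of normalized av} to replace $\tfrac{2(m-1)}{s}(\overline{\mathcal{A}}_N - m\overline{\mathcal{V}}_N)$ by $2\,d\overline{\mathcal{A}}_N/ds + O_0(N^{-3/2})$. Moving the resulting $2\,d\overline{\mathcal{A}}_N/ds$ to the left side and multiplying by $-1$ yields
\[
\frac{d}{ds}\overline{\mathcal{A}}_N = -I + [A_N] + O_0(N^{-3/2}).
\]

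To descend from $\overline{\mathcal{A}}_N$ to $\mathcal{A}_N$, I would use the decomposition $\overline{\mathcal{A}}_N = \mathcal{A}_N + \tfrac{c_N}{s^{m-1}}\int_{b=s}|\hat{\nabla}b|\, d\hat{A}$ (equation (\ref{Oan sum})), differentiate, and invoke Proposition \ref{Der normalizing term}, which tells us that the derivative of the second summand equals $[A_N]+O_2(N^{-3/2})$. The $[A_N]$ terms cancel exactly, leaving $d\mathcal{A}_N/ds = -I + O_0(N^{-3/2})$, which is (\ref{derivative of A_N2}).

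Finally, for (\ref{derivative monot colding}) I would simply apply Proposition \ref{derivative Colding}, whose identity (\ref{derivative non normalized monotonicity Colding}) states that $\tfrac{d}{ds}(2(m-1)\mathcal{V}_N - \mathcal{A}_N) = \tfrac{d}{ds}\mathcal{A}_N + O_0(N^{-3/2})$, so that the estimate for $d\mathcal{A}_N/ds$ transfers directly to the monotonic volume, the $O_0(N^{-3/2})$ orders being preserved under addition. The only subtlety — hardly an obstacle, since it has been handled in the preceding propositions — is bookkeeping of the error orders: each simplification step introduces an $O_0(N^{-3/2})$ term, and one must verify that these combine into a single $O_0(N^{-3/2})$ in the final expression, which follows from the additive rule for orders stated after Lemma \ref{lemma order}.
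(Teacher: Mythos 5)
Your proposal is correct and follows essentially the same route as the paper's proof: start from (\ref{Derivative oAN}), collapse the extra terms via (\ref{Controlled terms}), trade the $\overline{\mathcal{A}}_N-m\overline{\mathcal{V}}_N$ term for $2\,d\overline{\mathcal{A}}_N/ds$ via (\ref{difference AN and VN}), cancel $[A_N]$ against the derivative of the normalizing term from (\ref{Derivative of int nabla b}), and finally pass to the monotonic volume with (\ref{derivative non normalized monotonicity Colding}). The sign bookkeeping and the combination of the $O_0(1/N^{3/2})$ errors are handled exactly as the paper intends.
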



The integrand in (\ref{derivative of A_N2}) and (\ref{derivative monot colding}) can be written as follows.

\begin{Proposition}
The following equality holds,
\begin{equation}\label{integrand dAn}
    \left|\hat{\nabla}\hat{\nabla} b^{2}-\frac{\hat{\Delta} b^{2}}{m}\hat{g}\right|^{2}+\hat{\Ric}(\hat{\nabla}b^{2},\hat{\nabla} b^{2}) = \frac{4b^4}{(2-m)^2} \left|\nabla\nabla f + \Ric -\frac{1}{2\tau}g\right|^2 +O_1(\frac{1}{N}).
\end{equation}
    \begin{proof}
    Computing the partial derivatives of $b^2$, we see
    \begin{align*}
         \partial_{i}b^2 = -\frac{2b^2}{2-m} \partial_i f, \quad \partial_r b^2 = \frac{2b^2}{r} + O_2(\frac{1}{N^{1/2}}), \quad 
         \partial_i \partial_j b^2 = -\frac{2b^2}{2-m} \partial_i \partial_j  f + O_2(\frac{1}{N}).
    \end{align*}
Using  (\ref{Lap b2}) and the fact that $b^2/(2-m) = -2\tau +O_2(1/N)$ we write, 
\begin{equation}\nonumber
    \frac{\hat{\Delta}b^2}{m}\hat{g} = -\frac{2b^2}{2-m}\left(\frac{1}{2\tau}+O_2(\frac{1}{N})\right)\hat{g}.
\end{equation}
To calculate the components of $\hat{\nabla}\hat{\nabla}b^2$, we can refer to the Christoffel symbols listed in Section 3.1 of \cite{CZ06}.
A meticulous yet straightforward inspection  shows that the only  contributions of order $O(1)$ to the squared norm of $$S:=\hat{\nabla}\hat{\nabla}b^2-\frac{\hat{\Delta} b^{2}}{m}\hat{g},$$ arise from the $\{ij\}$ indices during the norm computation. The relevant Christoffel symbols are,
\begin{align}
    \hat{\Gamma}^r_{ij} = -\frac{r}{N}g^{rr}R_{ij}, \quad  \hat{\Gamma}^k_{ij} = \Gamma_{ik}^k, \quad
    \hat{\Gamma}^\alpha_{ij} =0.
\end{align}
We can then show,
\begin{equation}\nonumber
    -\hat{\Gamma}^r_{ij}\partial_r b^2  = \frac{2b^2}{N}R_{ij}+O_2(\frac{1}{N}) = -\frac{2b^2}{2-m}R_{ij} +O_2(\frac{1}{N}),
\end{equation}
and therefore, 
\begin{align*}
    S_{ij} & = \partial_i\partial_j b^2 - \hat{\Gamma}^k_{ij}\partial_k b^2  - \hat{\Gamma}^r_{ij}\partial_r b^2 -\frac{\hat{\Delta} b^{2}}{m}\hat{g}_{ij}\\  
    &= -\frac{2b^2}{2-m}\left(\nabla_i\nabla_j f + R_{ij} - \frac{1}{2\tau}g_{ij}\right) + O_2(\frac{1}{N}).
\end{align*}
Given that $v = 1 -2\tau R/N + O_2(1/N^2),$ it is straightforward to verify that, after crucial cancellations in $S_{\alpha \beta}$ and $S_{rr}$ between $\hat{\nabla}\hat{\nabla}b^2$ and $\hat{\Delta}b^2\hat{g}/m$, the remaining components of $S$ are,
\begin{align*}
S_{\alpha \beta} = O_2(1), \quad S_{rr} = O_2(\frac{1}{N}), \quad S_{ir} = O_2(\frac{1}{N^{1/2}}), \quad S_{i\alpha}=S_{r \alpha} = 0,
\end{align*}
from where we compute,
\begin{align}
\left|\hat{\nabla}\hat{\nabla} b^{2}-\frac{\hat{\Delta} b^{2}}{m}\hat{g}\right|^{2} & =
\hat{g}^{ik}\hat{g}^{jl}S_{ij}S_{kl} + O_2(\frac{1}{N})\\
& = \frac{4b^4}{(2-m)^2} \left|\nabla\nabla f + \Ric -\frac{1}{2\tau}g\right|^2 +O_2(\frac{1}{N}).
\end{align}
Now, using the coordinate expression for $\hat{\Ric}$ in terms of the Christoffel symbols, a straightforward computation (as performed in Corollary 3.1.2 of \cite{CZ06}) reveals that its components with respect to the $(\tau, \theta, x)$-coordinates are of order $O_1(1/N).$  Using (\ref{grad b}) and $\partial_r = (r/N)\partial_\tau$, we show $2b\hat{\nabla}b = O_2(1)\partial_{\tau} + O_2(1)\nabla f$. From this, it follows that
\begin{equation}
    \hat{\Ric}(\hat{\nabla}b^2,\hat{\nabla}b^2) = O_1(\frac{1}{N}).
\end{equation}
    \end{proof}

\end{Proposition}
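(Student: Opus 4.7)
The plan is to compute both sides of the identity in the coordinate system $(r,\theta,x)$ on $\hat{M}$, tracking orders in $1/N$ carefully. Starting from $b^2 = r^2 e^{2f/(m-2)}$, I would first compute the partial derivatives $\partial_i b^2$, $\partial_r b^2$, and $\partial_i\partial_j b^2$ by direct differentiation; since the exponential factor contributes terms of order $1/N$, the leading behavior is $\partial_i b^2 \sim -(2b^2/(2-m))\partial_i f$ and $\partial_i\partial_j b^2 \sim -(2b^2/(2-m))\partial_i\partial_j f$ up to $O_2(1/N)$ remainders. For the trace piece I would invoke (\ref{Lap b2}) to write $\hat{\Delta}b^2/m = 2|\hat{\nabla}b|^2 + O_2(1/N)$, which, combined with $b^2/(2-m) = -2\tau + O_2(1/N)$, produces the $-(2b^2/(2-m))(1/(2\tau))\,\hat{g}$ contribution needed to assemble Perelman's integrand.

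The bulk of the work is to compute $S := \hat{\nabla}\hat{\nabla}b^2 - (\hat{\Delta}b^2/m)\hat{g}$ block by block, using the Christoffel symbols of $\hat{g}$ from \cite{CZ06}. The crucial observation is that on the $\{ij\}$ block, the symbol $\hat{\Gamma}^r_{ij} = -(r/N)g^{rr}R_{ij}$ contains the Ricci tensor of $g$ as a factor; when paired with $\partial_r b^2 \sim 2b^2/r$, it contributes precisely $-(2b^2/(2-m))R_{ij}$. The $\hat{\Gamma}^k_{ij}\partial_k b^2$ piece converts the second partial of $f$ into the covariant Hessian $\nabla_i\nabla_j f$. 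Combining with the trace subtraction I expect
\[
S_{ij} = -\frac{2b^2}{2-m}\Bigl(\nabla_i\nabla_j f + R_{ij} - \frac{1}{2\tau}g_{ij}\Bigr) + O_2(1/N),
\]
which after contraction with $\hat{g}^{ik}\hat{g}^{jl} = g^{ik}g^{jl}$ gives the target coefficient $4b^4/(2-m)^2$ in front of the squared Perelman integrand.

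The delicate step, which I expect to be the main obstacle, is controlling the remaining blocks $S_{\alpha\beta}$, $S_{rr}$, and $S_{ir}$; the off-diagonal blocks $S_{i\alpha}$, $S_{r\alpha}$ vanish by the rotational symmetry of $\hat{g}$. On these nontrivial blocks both $\hat{\nabla}\hat{\nabla}b^2$ and $(\hat{\Delta}b^2/m)\hat{g}$ are individually large, so the required smallness of $S$ must come from precise cancellations. Carefully expanding $v = 1+Rr^2/N^2$ via Lemma \ref{lemma order}, I would aim to establish $S_{\alpha\beta} = O_2(1)$, $S_{rr} = O_2(1/N)$, and $S_{ir} = O_2(1/N^{1/2})$; raising indices with $\hat{g}^{\alpha\beta} = r^{-2}g_{\mathbb{S}^N}^{\alpha\beta}$ (where $r^{-2} = O(1/N)$) and $\hat{g}^{rr} = 1/v = 1 + O_2(1/N)$ then shows that each of these blocks contributes at most $O(1/N)$ to $|S|^2$, which is absorbed into the error.

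Finally, the $\hat{\Ric}$ term is disposed of by appealing to Perelman's computation in \cite{Per02} (detailed in \cite{CZ06}), which shows that every component of $\hat{\Ric}$ in the $(\tau,\theta,x)$ basis is $O_1(1/N)$. Since $2b\hat{\nabla}b$ has $O_2(1)$ coefficients in the $\partial_\tau$ and $\nabla f$ directions (after using $\partial_r = (r/N)\partial_\tau$ and $r = O(N^{1/2})$ to convert the radial component), the contraction $\hat{\Ric}(\hat{\nabla}b^2,\hat{\nabla}b^2)$ is $O_1(1/N)$ and can also be absorbed into the error term, completing the identity.
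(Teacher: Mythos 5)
Your proposal is correct and follows essentially the same route as the paper's proof: the same partial-derivative computations for $b^2$, the same use of (\ref{Lap b2}) together with $b^2/(2-m)=-2\tau+O_2(1/N)$ for the trace term, the same identification of the $\{ij\}$ block via $\hat{\Gamma}^r_{ij}\partial_r b^2$ producing the Ricci contribution, the same order estimates $S_{\alpha\beta}=O_2(1)$, $S_{rr}=O_2(1/N)$, $S_{ir}=O_2(1/N^{1/2})$ with cancellations on the remaining blocks, and the same disposal of $\hat{\Ric}(\hat{\nabla}b^2,\hat{\nabla}b^2)$ using the $O_1(1/N)$ bound on the components of $\hat{\Ric}$. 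No substantive differences.
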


Finally, we  reproduce Perelman's formula for the derivative of the $\mathcal{W}$-functional as the limit of the derivatives of $\mathcal{W}_N$. 

\begin{Theorem}\label{Final Theo}
The derivative of Perelman's $\mathcal{W}-$functional is,
\be
\frac{d\mathcal{W}}{d\lambda}(\lambda) = -\int_{M} 2\lambda\left( \left|\nabla\nabla f + \Ric -\frac{1}{2\tau}g\right|^2 \right)(4\pi\lambda)^{-n/2}e^{-f} d\nu.
\ee
    \begin{proof}
Define
\be
F_N := \frac{c_N}{s^{N+4}}\int_{b=s} \left(\left|\hat{\nabla}\hat{\nabla} b^{2}-\frac{\hat{\Delta} b^{2}}{m}\hat{g}\right|^{2}+\hat{\Ric}(\hat{\nabla}b^{2},\hat{\nabla} b^{2})\right)\frac{d\hat{A}}{|\hat{\nabla}b|}.
\ee 
Then, using (\ref{Vol Elm in s}) and (\ref{integrand dAn}), it is straightforward to show that
\be\label{expansion FN}
F_N = 4(4\pi)^{-n/2}(2N)^{n/2-1}\left( \int_{M}  \left|\nabla\nabla f + \Ric -\frac{1}{2\tau}g\right|^2e^{-f}d\nu  +O_1(\frac{1}{N})\right).
\ee
It follows that $F_N = (2N)^{n/2-1}O_1(1)$ and therefore,
\be
\frac{d}{ds}F_N = \frac{d\lambda}{ds} \frac{d}{d\lambda}\left( (2N)^{n/2-1}O_1(1)\right) = (2N)^{n/2-3/2}O_0(1).
\ee
Then, a Taylor expansion of order one centered at $s=\sqrt{2N\lambda}$ shows,
\begin{align*}
    \frac{d}{ds} & \big(2(m-1) \mathcal{V}_N - \mathcal{A}_N\big)\\ & = - \frac{c_{N}}{2s^{m+1}}\int_{\bar{s}\leq b\leq s}\left(\left|\hat{\nabla}\hat{\nabla} b^{2}-\frac{\hat{\Delta} b^{2}}{m}\hat{g}\right|^{2}+\hat{\Ric}(\hat{\nabla}b^{2},\hat{\nabla} b^{2})\right)d\hat{V} + O_0(\frac{1}{N^{3/2}}) \\
    & = -\frac{1}{2s^{m+1}}\int_{\bar{s}}^{s} w^{N+4}F_N(w)dw + O_0(\frac{1}{N^{3/2}})\\
    & = -\frac{1}{2s^{m+1}}\int_{\bar{s}}^{s} w^{N+4}\left(F_N(s)+(w-s)\frac{d}{ds}F_N(\xi_{w,s})\right)dw + O_0(\frac{1}{N^{3/2}}) \\
    & = -\frac{1}{2s^{m+1}}\int_{\bar{s}}^{s} w^{N+4}\left(F_N(s)+(w-s)(2N)^{n/2-3/2}O_0(1)\right)dw + O_0(\frac{1}{N^{3/2}}).\\
\end{align*}
Integrating, bounding the integral of the $O_0(1)$ term by above and below as in (\ref{Argument for o0 taylor}), and absorbing the terms that decay exponentially fast into $O_0(N^{-3/2})$, we find, 
\begin{align*}
    \frac{d}{ds} \big(2(m-1) & \mathcal{V}_N - \mathcal{A}_N\big) \\&= -\frac{s^{3-n}}{2(N+5)}F_N + \frac{s^{4-n}}{(N+6)(N+5)}(2N)^{n/2-3/2}O_0(1) +O_0(\frac{1}{N^{3/2}})\\
    & = -\frac{s^{3-n}}{2(N+5)}F_N + O_0(\frac{1}{N^{3/2}}).
\end{align*}
Finally, since 
\be\nonumber
\frac{d\mathcal{W}_N}{d\lambda}=\frac{d}{d\lambda}\big(2(m-1) \mathcal{V}_N - \mathcal{A}_N\big) = \frac{ds}{d\lambda}\frac{d}{ds}  \big(2(m-1) \mathcal{V}_N - \mathcal{A}_N\big),
\ee\nonumber
we use expression (\ref{expansion FN}) and $s=\sqrt{2N\lambda}$ to obtain,
\begin{align*}
\frac{d\mathcal{W}_N}{d\lambda}
& =  \left(\frac{N}{2\lambda}\right)^{1/2}\left(-\frac{(\sqrt{2N\lambda})^{3-n}}{2(N+5)}F_N + O_0(\frac{1}{N^{3/2}})\right)+ O_0(\frac{1}{N^{3/2}})\\
& = -2(4\pi)^{-n/2}(\sqrt{2N\lambda})^{2-n}(2N)^{n/2-1}\left( \int_{M}  \left|\nabla\nabla f + \Ric -\frac{1}{2\tau}g\right|^2e^{-f}d\nu  +O_1(\frac{1}{N})\right) \\
& \phantom{= }+O_0(\frac{1}{N}) \\
& = -2(4\pi)^{-n/2}\lambda^{1-n/2}\left( \int_{M}  \left|\nabla\nabla f + \Ric -\frac{1}{2\tau}g\right|^2e^{-f}d\nu  +O_1(\frac{1}{N})\right)+O_0(\frac{1}{N}) \\
& = -\int_{M} 2\lambda \left( \left|\nabla\nabla f + \Ric -\frac{1}{2\lambda}g\right|^2 \right)(4\pi\lambda)^{-n/2}e^{-f} d\nu +O_0(\frac{1}{N}).
\end{align*}
By Corollary \ref{convergence monot Colding},
\be\nonumber
\frac{d\mathcal{W}}{d\lambda}(\lambda) = \lim_{N \to \infty} \frac{d\mathcal{W}_N}{d\lambda}(\lambda),
\ee
which finishes the proof.
\end{proof}

\end{Theorem}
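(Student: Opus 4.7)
The plan is to take the limit $N\to\infty$ in the formula from Theorem \ref{derivative renormalized volume}, which expresses $\frac{d}{ds}\mathcal{W}_N$ as an integral over $\bar{s}\leq b\leq s$ in $\hat{M}$, and then invoke Corollary \ref{convergence monot Colding} to identify the limit with $d\mathcal{W}/d\lambda$. The main ingredients are already available: the integrand has a clean expansion into the Perelman integrand plus an $O_1(1/N)$ error thanks to equation (\ref{integrand dAn}), and the level-set volume element is controlled by Proposition \ref{Vol Element}.

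First I would introduce the auxiliary level-set quantity
\[
F_N := \frac{c_N}{s^{N+4}}\int_{b=s}\!\left(\left|\hat{\nabla}\hat{\nabla} b^{2}-\frac{\hat{\Delta} b^{2}}{m}\hat{g}\right|^{2}+\hat{\Ric}(\hat{\nabla}b^{2},\hat{\nabla} b^{2})\right)\frac{d\hat{A}}{|\hat{\nabla}b|},
\]
chosen so that the coarea formula rewrites the bulk integral in Theorem \ref{derivative renormalized volume} as $\int_{\bar{s}}^{s} w^{N+4}F_N(w)\,dw$ up to a multiplicative constant. Inserting (\ref{integrand dAn}), the estimate (\ref{GRADORD}), and the volume element (\ref{Vol Elm in s}) into the definition of $F_N$ produces
\[
F_N = 4(4\pi)^{-n/2}(2N)^{n/2-1}\!\left(\int_M\left|\nabla\nabla f+\Ric-\tfrac{1}{2\tau}g\right|^2 e^{-f}d\nu + O_1(1/N)\right),
\]
so $F_N = (2N)^{n/2-1}O_1(1)$ and consequently $dF_N/ds = (2N)^{n/2-3/2}O_0(1)$ via the chain rule $d\lambda/ds=\sqrt{2\lambda/N}$.

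Next I would apply a first-order Taylor expansion $F_N(w) = F_N(s) + (w-s)\frac{dF_N}{ds}(\xi_{w,s})$ inside the iterated integral and evaluate $\int_{\bar{s}}^{s} w^{N+4}\,dw$ exactly. The boundary contribution at $\bar{s}$ carries the ratio $(\bar{s}/s)^{N+5}=(\lambda_0/\lambda)^{(N+5)/2}$, which is absorbed into $O_0(e^{-cN})$ since $\lambda_0<\lambda$; the linear remainder is bounded by the same elementary argument used in (\ref{Argument for o0 taylor}) and (\ref{Taylor for V_N}), contributing an $O_0(N^{-3/2})$. What remains, after passing from $d/ds$ back to $d/d\lambda$ via the chain rule, is
\[
\frac{d\mathcal{W}_N}{d\lambda}=-\sqrt{\tfrac{N}{2\lambda}}\cdot\frac{s^{3-n}}{2(N+5)}F_N + O_0(1/N).
\]
Substituting $s=\sqrt{2N\lambda}$, $m=N+n+1$, and the expansion of $F_N$, the powers $(2N\lambda)^{(3-n)/2}$, $(N/(2\lambda))^{1/2}$, $(2N)^{n/2-1}$ and $(N+5)^{-1}$ must combine to yield precisely $2(4\pi)^{-n/2}\lambda^{1-n/2}$ in the limit, producing the Perelman formula. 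The conclusion is then immediate from Corollary \ref{convergence monot Colding}, which guarantees $d\mathcal{W}_N/d\lambda\to d\mathcal{W}/d\lambda$ uniformly on compact subsets of $(0,T)$.

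The hard part will be the bookkeeping in this last step: the normalization constant $c_N$, the level-set volume growth $(2N\lambda)^{N/2}$, the denominator $s^{m+1}=s^{N+n+2}$, and the polynomial factor $(N+5)$ each contribute nontrivial $N$-dependence, and the theorem requires that they cancel exactly to leave the finite, $N$-independent Perelman prefactor $2(4\pi)^{-n/2}\lambda^{1-n/2}$. Once this cancellation is verified and all error terms are confirmed to be $O_0(1/N)$, the rest of the argument is a direct application of Corollary \ref{convergence monot Colding}.
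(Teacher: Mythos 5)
Your proposal follows essentially the same route as the paper's proof: the same auxiliary level-set quantity $F_N$, the same expansion via (\ref{integrand dAn}) and (\ref{Vol Elm in s}), the same first-order Taylor/coarea argument with exponentially small boundary terms, and the same final appeal to Corollary \ref{convergence monot Colding}. The bookkeeping you flag as the remaining hard part does work out exactly as you anticipate: combining $\left(\tfrac{N}{2\lambda}\right)^{1/2}$, $(2N\lambda)^{(3-n)/2}$, $(2N)^{n/2-1}$, and $(N+5)^{-1}$ yields $2(4\pi)^{-n/2}\lambda^{1-n/2}$ up to $O_0(1/N)$, which is precisely the computation carried out at the end of the paper's proof.
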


\bibliographystyle{plain}
\bibliography{bibliography}
\end{document}